\begin{document}
\providecommand{\keywords}[1]{\textbf{\textit{Keywords: }} #1}
\newtheorem{theorem}{Theorem}[section]
\newtheorem{lemma}[theorem]{Lemma}
\newtheorem{proposition}[theorem]{Proposition}
\newtheorem{corollary}[theorem]{Corollary}
\newtheorem{problem}[theorem]{Problem}
\newtheorem{question}[theorem]{Question}
\newtheorem{conjecture}[theorem]{Conjecture}
\newtheorem{claim}[theorem]{Claim}
\newtheorem{condition}[theorem]{Condition}

\theoremstyle{definition}
\newtheorem{definition}[theorem]{Definition} 
\theoremstyle{remark}
\newtheorem{remark}[theorem]{Remark}
\newtheorem{example}[theorem]{Example}
\newtheorem{condenum}{Condition}

\def\p{\mathfrak{p}}
\def\q{\mathfrak{q}}
\def\s{\mathfrak{S}}
\def\Gal{\mathrm{Gal}}
\def\Ker{\mathrm{Ker}}
\def\soc{\mathrm{soc}}
\def\Coker{\mathrm{Coker}}
\newcommand{\cc}{{\mathbb{C}}}   
\newcommand{\ff}{{\mathbb{F}}}  
\newcommand{\nn}{{\mathbb{N}}}   
\newcommand{\qq}{{\mathbb{Q}}}  
\newcommand{\rr}{{\mathbb{R}}}   
\newcommand{\zz}{{\mathbb{Z}}}  
\def\K{\kappa}

\title{A note on monogenic even polynomials}
\author{Joachim K\"onig}
\address{Department of Mathematics Education, Korea National University of Education, Cheongju 28173, South Korea}
\email{jkoenig@knue.ac.kr}
\keywords{Monogenicity; algebraic integers; discriminants; cyclic extensions.}
\begin{abstract}
We extend several predecessor works on even sextic monogenic polynomials. In particular, we prove a conjecture of Lenny Jones \cite{Jones}, thereby classifying even sextic monogenic polynomials with cyclic Galois group. 
This result is key to completing previous partial results on existence or non-existence of infinite families of even sextic monogenic polynomials with a prescribed Galois group (\cite{Lavallee}).
Some of the underlying ideas are relevant for investigation of more general families of even polynomials $f(X^2)$, or power-compositional polynomials $f(X^\ell)$.
\end{abstract}
\maketitle

\section{Main results}

A number field $K$ is called {\it monogenic}, if 
 its ring of integers $\mathcal{O}_K$ is of the form $\mathbb{Z}[\alpha]$ for a suitable $\alpha\in \mathcal{O}_K$. In this case, $\alpha$ is called a {\it monogenerator} of $\mathcal{O}_K$. 
Let $f\in \mathbb{Z}[X]$ be an irreducible monic integer polynomial and $\alpha\in \overline{Q}$ a root of $f$. Call $f$ {\it monogenic}, if the ring $\mathbb{Z}[\alpha]$ is the full ring of integers of $K:=\mathbb{Q}(\alpha)$. It is well-known that this is equivalent to the polynomial discriminant $\Delta(f)$ equalling the field discriminant $\Delta(K)$; more generally a prime number $p$ divides the index $[\mathcal{O}_K:\mathbb{Z}[\alpha]]$ if and only if it divides the integer $\frac{\Delta(f)}{\Delta(K)}$.

In this note, we prove the following result.
\begin{theorem}
\label{thm:main}
Let $f(X)=X^6+aX^4+bX^2+c$ with $a,b,c\in \mathbb{Z}$ such that $f$ is irreducible and $\Gal(f/\mathbb{Q})\cong C_6$. Then $f$ is monogenic if and only if 
$$(a,b,c)\in \{(-7,14,-7), (-6,9,-3), (5,6,1), (6,5,1), (6,9,1), (9,6,1)\}.$$
\end{theorem}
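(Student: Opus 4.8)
The plan is to exploit the subfield structure of the cyclic sextic field $K=\mathbb{Q}(\alpha)$ in order to convert monogenicity into a rigid Diophantine condition on $(a,b,c)$. Writing $g(Y)=Y^3+aY^2+bY+c$ so that $f(X)=g(X^2)$ and setting $\beta:=\alpha^2$, a direct computation with the roots $\pm\sqrt{\beta_i}$ gives $\Delta(f)=-2^6c\,\Delta(g)^2$. Because $\Gal(f/\mathbb{Q})\cong C_6$, the field $K_3:=\mathbb{Q}(\beta)$ is the cyclic cubic subfield, so $\Delta(g)$ is a square $d^2$, while the quadratic subfield is $K_2=\mathbb{Q}(\sqrt{-c})$ since $N_{K_3/\mathbb{Q}}(\beta)=-c$. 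The nontrivial element of $\Gal(K/K_3)$ sends $\alpha\mapsto-\alpha$, so $\mathbb{Z}[\alpha]=\mathbb{Z}[\beta]\oplus\alpha\mathbb{Z}[\beta]$ respecting the decomposition into eigenspaces; taking the $+1$-part and comparing with $\mathcal{O}_K\cap K_3=\mathcal{O}_{K_3}$ shows that monogenicity of $f$ already forces $\mathbb{Z}[\beta]=\mathcal{O}_{K_3}$, i.e. the cubic is itself monogenic.

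Next I would apply the Dedekind criterion to $f$ one prime at a time in order to constrain $c$. For odd $p$ the even structure of $f$ means the repeated factors of $\bar f\in\mathbb{F}_p[X]$ arise either from repeated factors of $\bar g$ — excluded once $g$ is $p$-maximal, as secured above — or from the factor $X^2$ when $p\mid c$, and the criterion then forces $p\,\|\,c$. At $p=2$ I would match $v_2(\Delta(f))=6+v_2(c)+4v_2(d)$ against the conductor--discriminant value $v_2(\Delta(K))=3\,v_2(\mathfrak{f}_{K_2})$, using that $2$ is unramified in every cyclic cubic field; this pins down the $2$-adic shape of $c$ (for the six solutions, $c$ is odd with $c\equiv1\pmod4$). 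The conclusion of this step is that $c$ is squarefree and supported only on the primes ramifying in $K_3$, so that $c$ is essentially determined by the conductor $\mathfrak{f}_{K_3}$.

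The genuine finiteness comes from the $C_6$-condition, which says precisely that $-c\beta\in(K_3^\times)^2$. Writing $-c\beta=\gamma^2$ with $\gamma=\alpha\sqrt{-c}\in K_3$ (integral, of norm $\pm c^2$), the monogenicity $\mathcal{O}_{K_3}=\mathbb{Z}[\beta]$ lets me expand $\gamma=x+y\beta+z\beta^2$ with $x,y,z\in\mathbb{Z}$ and convert $\gamma^2=-c\beta$, via $\beta^3=-a\beta^2-b\beta-c$, into an explicit system of three integer equations in $a,b,c,x,y,z$. Phrased through the minimal polynomial $Y^3-TY^2+SY-N$ of $\gamma$, the requirement $\gamma\in\mathbb{Z}[\beta]=\mathcal{O}_{K_3}$ (equivalently, in the coprime case $c=1$, that $\gamma$ lie in $\mathbb{Z}[\gamma^2]$) forces the symmetric invariants $T,S$ into a very small set; all branches are immediately finite except the degenerate ones $T=0$ or $S=0$, which together with the cyclicity condition that $\Delta(g)$ be a square reduce to Mordell equations of the form $w^2=-4S^3-27$ and $w^2=\pm4T^3-27$. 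These have only finitely many, and explicitly small, integer solutions, so $(T,S,N)$ — hence the conductor of $K_3$ and finally $(a,b,c)$ — is bounded.

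A direct Dedekind verification on the resulting finite list, discarding reducible or non-$C_6$ cases, should then leave exactly the six stated tuples. I expect the main obstacle to be precisely this passage to finiteness: each individual monogenicity condition is only a squarefreeness constraint and so permits infinite families, and it is only the interaction with the cyclic structure — the fact that $-c\beta$ is the square of an algebraic integer that must itself lie in the monogenic order $\mathbb{Z}[\beta]$ — that rigidifies the problem. Additional care will be needed at the wild prime $p=3$, where a cyclic cubic can have conductor $9$ (as for the solutions with $c=-3$), and in treating the shared-ramification cases $c=-7,-3$ in parallel with the coprime case $c=1$.
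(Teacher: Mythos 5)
Your architecture genuinely parallels the paper's: the descent from monogenicity of $f$ to monogenicity of the cubic $g$ (via the $\alpha\mapsto-\alpha$ eigenspace argument) is exactly the paper's Lemma \ref{lem:monogen}, and your criterion that $\Gal(f/\mathbb{Q})\cong C_6$ amounts to $-c\beta\in(K_3^\times)^2$ is correct and equivalent to the paper's $D_6$-parametrization: writing the minimal polynomial of $\gamma$ as $Y^3-TY^2+SY-N$ one gets $N=\pm c^2$, $a=(T^2-2S)/c$, $b=(S^2-2TN)/c^2$, which is the paper's Lemma \ref{lem:d6} with $T=n$, $S=mc$, $N=c^2$. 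The genuine gap is at the heart of the theorem: the claim that $\gamma\in\mathbb{Z}[\beta]$ together with $\gamma^2=-c\beta$ ``forces the symmetric invariants $T,S$ into a very small set'' is asserted with no mechanism, and as stated it does not follow. What the integrality of $\gamma$ actually gives, by comparing $\Delta(\gamma)$ and $\Delta(\beta)$, is the index identity $|TS-N|\cdot[\mathcal{O}_{K_3}:\mathbb{Z}[\gamma]]=|c|^3$; to conclude that $TS-N$ (the paper's $c^2(jkc-1)$) is a unit, one needs every prime dividing it to also divide $c$ and then derive a contradiction, and for that one needs $c\mid m$ and $c\mid n$, i.e., that the squarefree $c$ divides \emph{all} coefficients of $g$. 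Your Dedekind step only yields $p\,\|\,c$; the full divisibility is the paper's Lemma \ref{lem:evenmono} ($f$ is $p$-Eisenstein for every $p\mid c$), whose proof is a separate nontrivial ramification argument using monogenicity of the sextic, not just $p$-maximality of the cubic. (The paper's own route to $jkc-1=\pm1$ in Lemma \ref{lem:c6mono} is different again: each prime $p\mid jkc-1$ forces total ramification in the cubic, hence $p^8\mid\Delta(f)$, against the maximal exponent $5$ for a tame prime in a $C_6$ field discriminant, with an ad hoc mod-$9$ computation to kill $p=3$.) Either way, a real argument is missing exactly where the theorem is hard.

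Second, the degenerate branches are misstated in a way that would lose two of the six solutions. With $S=0$ (resp.\ $T=0$) the cyclicity condition is not a Mordell equation: it reads $w^2=-4k^3c-27$ (case $m=0$, $n=kc$) resp.\ $w^2=-4j^3c^2-27$ (case $n=0$, $m=jc$), with the squarefree $c\equiv1\bmod4$ a \emph{free parameter}; your displayed equations $w^2=-4S^3-27$, $w^2=\pm4T^3-27$ are correct only for $c=1$, whereas the solutions $(a,b,c)=(-7,14,-7)$ and $(-6,9,-3)$ live precisely in these branches with $c\ne1$. Finiteness there requires an additional monogenicity input: the paper shows that any prime $p\mid w$ makes $g$ congruent mod $p$ to a square times a linear factor, which in a monogenic $C_3$-polynomial must be a cube, forcing $p=3$; and then $v_3(w)\le2$ because the $3$-exponent of the field discriminant of a $C_6$-field is at most $9$ --- again using $\Delta(f)=\Delta(K)$ for the sextic. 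Your closing caveat about ``care at the wild prime $3$'' points in the right direction, but without these two ingredients (the Eisenstein divisibility $c\mid a,b$ and the $3$-adic exponent bound) your ``resulting finite list'' is never actually finite, so the concluding Dedekind verification has nothing to run on.
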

Compare \cite[Theorem 1.1]{Jones}, which classified the monogenic polynomials inside certain (one- or two-parameter) subfamilies of the family of all even cyclic sextics, in particular discovering all the six monogenic cases above. That these are in fact the only monogenic cases inside the whole family was conjectured, based on computational evidence, in Remark 3.1 of the same paper. 
Note that every cyclic sextic number field, and more generally every sextic field with a cubic subfield, can be generated by an even integral polynomial as above, but it is not necessarily true that a monogenerator of a monogenic such number field must have such a minimal polynomial, so that the above theorem must not be confused with a full classification of cyclic sextic monogenic number fields (to give just one example, the field generated by $X^6+21X^4+35X^2+7$ is $\mathbb{Q}(\zeta_7)$ and hence monogenic, whereas the polynomial itself is not). Further investigations on the monogenicity of cyclic sextic {\it fields} (of certain particular shapes) have been undertaken in \cite{GR} (showing that all but finitely many of the so-called ``simplest sextic" fields are non-monogenic) and \cite{Gaal}. 
For a broader overview of results on the monogenicity of (cyclic or non-cyclic) sextic fields, see also \cite[Chapter 11]{Gaal_Book}.

For comparison, we also give an analog of Theorem \ref{thm:main} for certain higher degrees. In spite of the larger degrees, this will be more straightforward to deduce than the $C_6$ case, essentially due to the paucity of cyclic monogenic fields of prime degree $q\ge 5$ (cf.\ \cite{Gras}).
\begin{lemma}
\label{lem:c10andmore}
For any prime $q\ge 5$ there are at most finitely many monogenic even polynomials $f(X)=g(X^2)\in \mathbb{Z}[X]$ of degree $2q$ with $\Gal(f/\mathbb{Q})\cong C_{2q}$. In the range $5\le q\le 19$, the only cases are
{\footnotesize{
\begin{itemize}
\item[1)] $q=5$, and $f\in \{X^{10} + 9X^8 + 28X^6 + 35X^4 + 15X^2 + 1, X^{10} - 11X^8 + 44X^6 - 77X^4 + 55X^2 - 11,  X^{10} + 15X^8 + 35X^6 + 28X^4 + 9X^2 + 1\}$.
\item[2)] $q=11$, and  $f\in \{X^{22} + 21X^{20} + 190X^{18} + 969X^{16} + 3060X^{14} + 6188X^{12} + 8008X^{10} + 6435X^8 + 3003X^6 + 715X^4 + 66X^2 + 1, 
    X^{22} - 23X^{20} + 230X^{18} - 1311X^{16} + 4692X^{14} - 10948X^{12} + 16744X^{10} - 16445X^8 + 9867X^6 - 3289X^4 + 506X^2 - 23,  
    X^{22} + 66X^{20} + 715X^{18} + 3003X^{16} + 6435X^{14} + 8008X^{12} + 6188X^{10} + 3060X^8 + 969X^6 + 190X^4 + 21X^2 + 1\}$.
\end{itemize}}}
\end{lemma}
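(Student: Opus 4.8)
The plan is to reduce monogenicity of the degree-$2q$ even polynomial to monogenicity of its degree-$q$ ``square root'' together with a maximal-ramification condition, and then to exploit the scarcity of monogenic cyclic fields of prime degree. Write $f(X)=g(X^2)$ with $\deg g=q$, let $\alpha$ be a root of $f$, set $\beta:=\alpha^2$ (a root of $g$), and put $K:=\qq(\alpha)$, $L:=\qq(\beta)$. Since $f$ is irreducible so is $g$; because $K/\qq$ is cyclic of degree $2q$ it has a unique subfield of each degree dividing $2q$, and as $[L:\qq]=q$ the field $L$ is the unique degree-$q$ subfield, cyclic over $\qq$, with $K=L(\sqrt\beta)$ a quadratic Kummer extension. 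The starting point is the classical discriminant identity for even polynomials, $\Delta(g(X^2))=(-4)^{q}\,g(0)\,\Delta(g)^2$, which I would record with a one-line proof from the factorization of the roots of $f$ as $\pm\sqrt{\beta_i}$.

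Next I would prove the key reduction: a monogenic $f$ forces $\mathcal{O}_L=\zz[\beta]$ together with $\mathfrak{d}_{K/L}=(4\beta)$ (equivalently $\mathcal{O}_K=\mathcal{O}_L[\alpha]$). Combine three identities: the tower formula $|\Delta(K)|=|\Delta(L)|^2\,N_{L/\qq}(\mathfrak{d}_{K/L})$; the index relation $|\Delta(g)|=[\mathcal{O}_L:\zz[\beta]]^2\,|\Delta(L)|$; and the even-polynomial identity, giving $|\Delta(f)|=2^{2q}\,|g(0)|\,|\Delta(g)|^2$. Since $\mathfrak{d}_{K/L}$ divides the discriminant $(-4\beta)$ of $X^2-\beta$, and $N_{L/\qq}\!\big((4\beta)\big)=2^{2q}\,|N_{L/\qq}(\beta)|=2^{2q}\,|g(0)|$, the equality $|\Delta(f)|=|\Delta(K)|$ forces \emph{simultaneously} $[\mathcal{O}_L:\zz[\beta]]=1$ and $\mathfrak{d}_{K/L}=(4\beta)$. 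In particular the degree-$q$ subfield $L$ is monogenic with monogenerator $\beta$. I would track the $2$-adic contribution and the signs here with care, as this is the heart of the argument.

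For the finiteness assertion, fix the prime $q\ge 5$. By the reduction, any monogenic $f$ yields a monogenic cyclic field $L$ of degree $q$, and by the theorem of Gras \cite{Gras} there are only finitely many such $L$. For each fixed $L$ I would bound the quadratic lifts: since $K/\qq$ is abelian, $\mathfrak{d}_{K/L}$ is supported on the primes ramifying in the quadratic subfield $F$ of $K$, so the condition $\mathfrak{d}_{K/L}=(4\beta)$ simultaneously bounds the ramification of $F$ (hence restricts $F$ to a finite set) and forces $|N_{L/\qq}(\beta)|$ to take one of finitely many values. As replacing $\beta$ by $\beta+m$ changes $|N_{L/\qq}(\beta)|$ without bound, only finitely many translates survive; together with Gy\H{o}ry's finiteness theorem for the even monogenerators $\alpha$ of each resulting $K$, this leaves finitely many $f$.

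Finally, for $5\le q\le 19$ I would run the reduction against the known list of monogenic cyclic fields of prime degree $q$ from \cite{Gras}. The salient observation is that $K$ is itself a monogenic cyclic field of degree $2q$, whose natural (and, I expect, only) source is $K=\qq(\zeta_p)$ with $p=2q+1$ prime: its maximal real subfield $L=\qq(\zeta_p)^{+}$ is monogenic, and even monogenerators such as $\zeta_p-\zeta_p^{-1}$ and its Galois variants produce the listed polynomials. In this range $p=2q+1$ is prime exactly for $q=5$ ($p=11$) and $q=11$ ($p=23$), matching the two cases of the statement; for $q\in\{7,13,17,19\}$ one checks that no monogenic cyclic field of degree $q$ admits a quadratic lift with $\mathfrak{d}_{K/L}=(4\beta)$ and Galois group $C_{2q}$, so no $f$ arises. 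The main obstacle is precisely this last, computational step: enumerating the Gras fields for each $q$ and, for each surviving candidate, deciding existence of an even monogenerator by solving the local conditions defining $\mathfrak{d}_{K/L}=(4\beta)$ and then verifying or refuting full monogenicity of $f=g(X^2)$ via a direct discriminant computation.
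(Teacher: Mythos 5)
Your second paragraph (the reduction) is correct and is a legitimate variant of the paper's elementary Lemma \ref{lem:monogen}: comparing $|\Delta(f)|=4^q|g(0)|\,|\Delta(g)|^2$ with the tower formula $|\Delta(K)|=|\Delta(L)|^2 N_{L/\qq}(\mathfrak{d}_{K/L})$ and $\mathfrak{d}_{K/L}\mid(4\beta)$ does force $[\mathcal{O}_L:\zz[\beta]]=1$ and $\mathfrak{d}_{K/L}=(4\beta)$ simultaneously. The genuine gap is in your finiteness paragraph: the claim that $\mathfrak{d}_{K/L}=(4\beta)$ ``bounds the ramification of $F$ (hence restricts $F$ to a finite set) and forces $|N_{L/\qq}(\beta)|$ to take one of finitely many values'' is circular and unproved. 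The support of $(4\beta)$ consists of $2$ together with the primes dividing $(\beta)$, and these move with $\beta$: since $[L:\qq]$ is odd, \emph{every} quadratic field $F$ composes with the fixed $L$ to a $C_{2q}$-field, so nothing restricts $F$ a priori, and for a translate $\beta=\pm\beta_0+a$ the ideal $(\beta)$, hence $|N_{L/\qq}(\beta)|$, is unbounded. The equality $\mathfrak{d}_{K/L}=(4\beta)$ by itself only says that $(\beta)$ is a squarefree ideal with maximal wild behaviour at $2$ --- conditions compatible with arbitrarily large norms. Your argument uses the hypothesis $\Gal(f/\qq)\cong C_{2q}$ only through ``abelian $\Rightarrow$ supported on primes of $F$,'' which cannot suffice: for the non-cyclic groups the very same pattern (monogenerator of the subfield plus a squarefree-discriminant condition) admits \emph{infinite} monogenic families, as in Theorem \ref{thm:general}(a), so cyclicity must do real work exactly at this step.

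This missing bound is precisely the content of the paper's Lemma \ref{lem:evenmono}, whose proof is the technical heart (inertia groups in $S_2\wr C_q$, tame discriminant exponents): monogenicity forces $f$ to be $p$-Eisenstein for every $p\mid g(0)$, so each such $p$ is totally ramified in $K$, in particular ramified in $L$; since $2q+1$ is the only ramified prime of the unique Gras field $L$, one gets $g(0)\in\{\pm1,\pm(2q+1)\}$, and then your translate argument (identical to the paper's) finishes. Inside your framework the gap can be closed as follows: cyclicity of $K=L(\sqrt{\beta})$ over $\qq$ forces $\beta\in d\,(L^{*})^2$ for a squarefree integer $d$; squarefreeness of $(\beta)$ then forces every prime of $L$ above each unramified $p\mid d$ to divide $(\beta)$ exactly once, whence $g\equiv X^q \bmod p$, contradicting $\mathcal{O}_L=\zz[\beta]$ with $p$ unramified in $L$; so $d$ is supported on $\{2,2q+1\}$ and $|N_{L/\qq}(\beta)|=|d|^q$ is bounded. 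Two further points on the explicit range: Gy\H{o}ry gives only abstract finiteness of monogenerators, whereas listing the polynomials requires the explicitly determined monogenerators of $\qq(\zeta_{2q+1})^{+}$, which the paper takes from \cite{MR}; and your guiding expectation that the lifts come from $K=\qq(\zeta_p)$ with monogenerator $\zeta_p-\zeta_p^{-1}$ is wrong --- that element's minimal polynomial has constant coefficient $+p$ and occurs in neither list (compare the paper's remark that $X^6+21X^4+35X^2+7$ generates the monogenic field $\qq(\zeta_7)$ without being a monogenic polynomial); the listed $f$ generate the totally real or the conductor-$4p$ imaginary cyclic degree-$2q$ fields, never $\qq(\zeta_p)$ itself.
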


In Section \ref{sec:general}, we will use Theorem \ref{thm:main} (and a few more elementary additional observations) to give a comprehensive overview of even monogenic sextics with prescribed Galois groups (Theorem \ref{thm:general}), and in particular show that among all the possible Galois groups of even sextics, $C_6$ and $S_3$ are the only ones not possessing an infinite family of monogenic even sextics. A comparable classification for even {\it octics} (although with certain additional restrictions) was recently given in \cite{Jones2}. 
Note also that the existence of infinitely many monogenic sextic {\it fields} with Galois group $S_3$ is known (e.g., \cite[Theorem 3.8]{GK}), whereas for the group $C_6$, this seems to be an open problem.

\section{Some preparatory lemmas}
We need a couple of basic lemmas. The first two are fairly well-known, but we include proofs for completeness.
\begin{lemma}
\label{lem:monogen}
Let $g\in \mathbb{Z}[X]$ be monic and $d\in \mathbb{N}$ such that $f(X):=g(X^d)$ is irreducible and monogenic. Then $g$ is irreducible and monogenic as well.
\end{lemma}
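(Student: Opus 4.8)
The plan is to reduce everything to the observation that the integral power basis of $\mathbb{Z}[\alpha]$ respects the subfield $L := \mathbb{Q}(\alpha^d)$. Throughout write $\alpha$ for a root of $f$, set $\beta := \alpha^d$ and $n := \deg g$, so that $\deg f = dn$ and $K := \mathbb{Q}(\alpha)$ has degree $dn$ over $\mathbb{Q}$.

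First I would dispatch irreducibility of $g$ by contraposition: if $g = g_1 g_2$ were a nontrivial factorization over $\mathbb{Z}$, then $f(X) = g_1(X^d)g_2(X^d)$ would be a nontrivial factorization of $f$, contradicting the hypothesis. Hence $g$ is irreducible; being monic with $g(\beta)=0$, it is the minimal polynomial of $\beta$, so $[L:\mathbb{Q}] = n$ and $1, \beta, \dots, \beta^{n-1}$ is a $\mathbb{Q}$-basis of $L$.

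The core step is to show $\mathbb{Z}[\alpha]\cap L = \mathbb{Z}[\beta]$. Since $f$ is monogenic, $\mathcal{O}_K = \mathbb{Z}[\alpha]$, with integral power basis $1,\alpha,\dots,\alpha^{dn-1}$. Now $L$ is the $\mathbb{Q}$-span of $1,\beta,\dots,\beta^{n-1} = \alpha^0,\alpha^d,\dots,\alpha^{(n-1)d}$, that is, of exactly those basis vectors whose exponent is divisible by $d$. By linear independence of the power basis, an element $\sum_{i=0}^{dn-1} c_i\alpha^i \in K$ lies in $L$ if and only if $c_i = 0$ whenever $d \nmid i$. If such an element moreover lies in $\mathbb{Z}[\alpha]$, then all $c_i \in \mathbb{Z}$, and the surviving terms exhibit it as a $\mathbb{Z}$-linear combination of powers of $\beta$; this yields $\mathbb{Z}[\alpha]\cap L \subseteq \mathbb{Z}[\beta]$, and the reverse inclusion is immediate from $\beta = \alpha^d \in \mathbb{Z}[\alpha]$.

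Finally I would invoke $\mathcal{O}_L = \mathcal{O}_K \cap L$ (integrality over $\mathbb{Z}$ is an absolute notion, so an element of $L$ is integral precisely when it is integral as an element of $K$), whence $\mathcal{O}_L = \mathbb{Z}[\alpha]\cap L = \mathbb{Z}[\beta]$, which is exactly monogenicity of $g$. I do not anticipate a serious obstacle; the single point needing care is the bookkeeping that the power-basis coordinates of an element of $\mathbb{Z}[\alpha]$ are genuinely integral, so that killing the coordinates with $d \nmid i$ lands us in $\mathbb{Z}[\beta]$ rather than merely in $\mathbb{Q}(\beta)$. This is precisely where the monogenicity of $f$ enters, through the fact that $\mathcal{O}_K = \mathbb{Z}[\alpha]$ comes equipped with this integral power basis.
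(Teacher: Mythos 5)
Your proposal is correct and follows essentially the same route as the paper: both arguments exploit that the power basis $1,\alpha,\dots,\alpha^{dn-1}$ of $\mathcal{O}_K=\mathbb{Z}[\alpha]$ contains the sub-basis $\alpha^0,\alpha^d,\dots,\alpha^{(n-1)d}$ of $L=\mathbb{Q}(\alpha^d)$, so that uniqueness of coordinates forces any algebraic integer of $L$ to lie in $\mathbb{Z}[\alpha^d]$. Your only deviations are cosmetic---you prove irreducibility of $g$ by contraposition on a factorization rather than by the paper's degree count, and you make the standard identity $\mathcal{O}_L=\mathcal{O}_K\cap L$ explicit where the paper leaves it implicit.
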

\begin{proof} Let $\alpha$ be a root of $f$. Since $\alpha^d$ is a root of $g$ and $\mathbb{Q}(\alpha)$ is (at most) a degree-$d$ extension of $\mathbb{Q}(\alpha^d)$, irreducibility of $f$ implies irreducibility of $g$. Furthermore, by assumption every algebraic integer in $\mathbb{Q}(\alpha)$ can be written uniquely as an integral linear combination of $\alpha^0,\dots, \alpha^{rd-1}$, where $r:=\deg(g)$. At the same time, every element of $\mathbb{Q}(\alpha^d)$ can be written uniquely as a rational linear combination of $\alpha^0,\alpha^d, \dots \alpha^{(r-1)d}$. It follows that every algebraic integer of $\mathbb{Q}(\alpha^d)$ is a unique integral linear combination of $\alpha^0,\alpha^d,\dots, \alpha^{(r-1)d}$, showing that $\mathbb{Z}[\alpha^d]$ is the full ring of integers of $\mathbb{Q}(\alpha^d)$.
\end{proof}

\begin{lemma}
\label{lem:eisen}
Let $K$ be a number field with monogenerator $\alpha$, and assume that there exists a prime $p$ which is totally ramified in $K$ and such that $\alpha$ is of norm divisible by $p$. Then the minimal polynomial of $\alpha$ over $\mathbb{Q}$ is $p$-Eisenstein.
\end{lemma}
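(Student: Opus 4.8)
The plan is to exploit the fact that a totally ramified prime $p$ forces a single prime $\mathfrak{p}$ of $\mathcal{O}_K$ lying over $p$, with $p\mathcal{O}_K = \mathfrak{p}^n$ where $n = [K:\mathbb{Q}] = \deg(f)$, and to translate the given conditions into the standard Eisenstein criterion on the coefficients of $f(X) = X^n + c_{n-1}X^{n-1} + \dots + c_0$. First I would reduce the polynomial modulo $p$: since $\alpha$ is a monogenerator, $\mathcal{O}_K = \mathbb{Z}[\alpha]$, and the Dedekind-style factorization of $p\mathcal{O}_K$ matches the factorization of $f \bmod p$ in $\mathbb{F}_p[X]$. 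Because $p$ is totally ramified, $f \bmod p$ must be the $n$-th power of a single linear factor, i.e. $f(X) \equiv (X - \beta)^n \pmod{p}$ for some $\beta \in \mathbb{F}_p$. I would then argue that $\beta = 0$: the norm of $\alpha$ is (up to sign) the constant term $c_0 = f(0)$, and the hypothesis that $p \mid N_{K/\mathbb{Q}}(\alpha)$ gives $p \mid c_0$, so $f(0) \equiv 0 \pmod p$, forcing $\beta^n \equiv 0$ and hence $\beta = 0$ in $\mathbb{F}_p$.

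Having established $f(X) \equiv X^n \pmod{p}$, the congruence immediately yields $p \mid c_i$ for all $0 \le i \le n-1$, which is all of the Eisenstein divisibility conditions except the one on the constant term. The remaining and most delicate point is to show $p^2 \nmid c_0$. For this I would invoke the monogenicity hypothesis more forcefully: monogenicity means $\Delta(f) = \Delta(K)$, equivalently that no prime divides the index $[\mathcal{O}_K : \mathbb{Z}[\alpha]]$. The key is to rule out the possibility that $f$ is ``$p$-Eisenstein except that $p^2 \mid c_0$'', which would make $p$ a divisor of the index (or would contradict total ramification). Concretely, from $p\mathcal{O}_K = \mathfrak{p}^n$ and $v_{\mathfrak{p}}(\alpha) \ge 1$ (since $\alpha \equiv 0 \bmod \mathfrak{p}$), one computes $v_{\mathfrak{p}}(N_{K/\mathbb{Q}}(\alpha)) = v_{\mathfrak{p}}(c_0) = n\cdot v_p(c_0)$ on one side and $v_{\mathfrak{p}}(\alpha)\cdot$(something) on the other; the total ramification pins down $v_{\mathfrak{p}}(\alpha) = 1$ exactly, whence $v_p(N(\alpha)) = 1$, i.e. $p \,\|\, c_0$.

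The cleanest route to $v_{\mathfrak{p}}(\alpha)=1$ is the following. Since $\mathbb{Z}[\alpha] = \mathcal{O}_K$, the element $\alpha$ generates $\mathcal{O}_K$ over $\mathbb{Z}$, and because $\alpha \in \mathfrak{p}$ the quotient $\mathcal{O}_K/(\alpha)$ is a ring in which $p$ is nilpotent; comparing $\mathbb{Z}$-ranks shows $N_{K/\mathbb{Q}}(\alpha) = \pm c_0$ has $p$-valuation equal to $v_{\mathfrak{p}}(\alpha)$ (using that the residue degree is $1$). If $v_{\mathfrak{p}}(\alpha) \ge 2$, then $\alpha \in \mathfrak{p}^2$, and since $\mathfrak{p}^2 \cap \mathbb{Z}[\alpha]$ together with the relation $\mathfrak{p} = (p,\alpha)$ would force $p$ and $\alpha$ to be non-independent modulo $\mathfrak{p}^2$, one derives that $1, \alpha, \dots, \alpha^{n-1}$ cannot span $\mathcal{O}_K/\mathfrak{p}^2$ over $\mathbb{F}_p$, contradicting $\mathbb{Z}[\alpha] = \mathcal{O}_K$. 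Hence $v_{\mathfrak{p}}(\alpha) = 1$ and $p \,\|\, c_0$, completing the Eisenstein criterion.

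I expect the main obstacle to be the final step, namely cleanly deducing $p^2 \nmid f(0)$ purely from monogenicity together with total ramification, without circularity: one must be careful that the argument uses monogenicity (that $\alpha$ is an actual monogenerator of $\mathcal{O}_K$) rather than merely irreducibility of $f$. The valuation bookkeeping $v_{\mathfrak{p}}(\alpha) = 1 \Leftrightarrow p \,\|\, N_{K/\mathbb{Q}}(\alpha)$, driven by the total ramification $e = n$ and residue degree $f = 1$, is the crux, and everything else (the reduction $f \equiv X^n \bmod p$) is routine.
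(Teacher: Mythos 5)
Your proposal is correct and follows essentially the same route as the paper: total ramification plus $p \mid N(\alpha)$ forces $f \equiv X^n \bmod p$, and monogenicity pins down $v_{\mathfrak{p}}(\alpha)=1$ (your spanning argument in $\mathcal{O}_K/\mathfrak{p}^2$ is exactly the paper's parenthetical remark that an element of valuation $1$ could otherwise not be an integral combination of powers of $\alpha$), whence $p \,\|\, c_0$. The only difference is presentational: you invoke Dedekind's factorization theorem and work out the valuation bookkeeping $v_p(N(\alpha)) = f(\mathfrak{p}/p)\cdot v_{\mathfrak{p}}(\alpha)$ explicitly, where the paper compresses both into two lines.
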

\begin{proof}
Let $\mathfrak{p}$ be the unique prime of $K$ extending $p$. Since $\alpha$ is of norm $N(\alpha)$ divisible by $p$, it must be of positive $\mathfrak{p}$-adic valuation, and since $\alpha$ is a monogenerator, it must be of $\mathfrak{p}$-adic valuation $1$ (or else, an integral element of this valuation could not be an integral linear combination of powers of $\alpha$). Hence, $N(\alpha)$ is strictly divisible by $p$. Moreover, since $p$ is totally ramified in $K/\mathbb{Q}$, the minimal polynomial $f$ of $\alpha$ must reduce modulo $p$ to a power of a linear polynomial. However, $f\mod p$ is divisible by $X$ by assumption, so that $f\mod p$ must be a monomial. In total, we have obtained that $f$ is $p$-Eisenstein.
\end{proof}

The following lemma can essentially be read out of \cite[Lemma 2.3]{Jones}, which states that the reducibility of a certain resolvent polynomial implies the below shape for an even sextic polynomial (the converse being verifiable by direct calculation), 
without explicitly naming the associated Galois group $D_6$.
\begin{lemma}
\label{lem:d6}
Let $f(X)=X^6+aX^4+bX^2+c\in \mathbb{Z}[X]$ be irreducible. 
Then $\Gal(f/\mathbb{Q})$ is a subgroup of $D_6$ if and only if there exist integers $m,n\in \mathbb{Z}$ such that ($c$ divides $n^2$ and)
$$a=n^2/c-2m, \  b=m^2-2n.$$
\end{lemma}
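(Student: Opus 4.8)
The plan is to exploit the factorization structure of even sextics. Write $g(Y)=Y^{3}+aY^{2}+bY+c$, so that $f(X)=g(X^{2})$; since $f$ is irreducible, so is $g$ (Lemma \ref{lem:monogen} with $d=2$), and the roots of $f$ are $\pm\alpha_{1},\pm\alpha_{2},\pm\alpha_{3}$ where $\alpha_{i}^{2}=\beta_{i}$ and the $\beta_{i}$ are the distinct roots of $g$. Passing to $\beta_{i}=\alpha_{i}^{2}$ shows that $\Gal(f/\mathbb{Q})$ embeds, respecting the pairing of $\pm\alpha_{i}$, into the signed permutation group $C_{2}\wr S_{3}$ of order $48$. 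Under this embedding I would identify the dihedral group $D_{6}\cong C_{2}\times S_{3}$ with the subgroup consisting of those signed permutations whose sign part is \emph{diagonal} (either no flip or the simultaneous flip $\alpha_{i}\mapsto-\alpha_{i}$ for all $i$) composed with an arbitrary index permutation; equivalently, $D_{6}$ is the stabilizer of the partition of the six roots into the two ``triangles'' $\{\alpha_{1},\alpha_{2},\alpha_{3}\}$ and $\{-\alpha_{1},-\alpha_{2},-\alpha_{3}\}$. Thus the assertion ``$\Gal(f/\mathbb{Q})\subseteq D_{6}$'' is to be read as this diagonal-sign condition.

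For the forward implication, assume $\Gal(f/\mathbb{Q})\subseteq D_{6}$. Then the Galois group preserves the triangle partition, so I may choose $\alpha_{1},\alpha_{2},\alpha_{3}$ (one root from each $\pm$-pair) to be the vertices of one triangle, and set $s=\alpha_{1}+\alpha_{2}+\alpha_{3}$, $\tau=\alpha_{1}\alpha_{2}+\alpha_{1}\alpha_{3}+\alpha_{2}\alpha_{3}$, $\rho=\alpha_{1}\alpha_{2}\alpha_{3}$. Every element of $D_{6}$ fixes $\tau$ (it is even and index-symmetric) and fixes $s\rho$ (an index permutation fixes both factors, while the diagonal flip sends $s\mapsto-s$ and $\rho\mapsto-\rho$). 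Hence $m:=-\tau$ and $n:=s\rho$ are fixed by $\Gal(f/\mathbb{Q})$, so they are rational; being symmetric functions of the algebraic integers $\alpha_{i}$, they lie in $\mathbb{Z}$. It then remains to read off the relations from $\rho^{2}=\beta_{1}\beta_{2}\beta_{3}=-c$, from $s^{2}=\sum\beta_{i}+2\tau=-a+2\tau$, and from the identity $\tau^{2}=b+2s\rho$ (a direct expansion using $\beta_{i}/\alpha_{i}=\alpha_{i}$): these give $n^{2}/c=-s^{2}=a+2m\in\mathbb{Z}$ (so $c\mid n^{2}$), $a=n^{2}/c-2m$, and $b=m^{2}-2n$.

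For the converse, given integers $m,n$ with $c\mid n^{2}$ and the stated relations, I would verify the polynomial identity
\[
f(X)=\bigl(X^{3}-mX\bigr)^{2}-\bigl(sX^{2}+\rho\bigr)^{2},\qquad \rho^{2}=-c,\ s\rho=n,\ s^{2}=-n^{2}/c,
\]
(when $n=0$ one uses the equivalent form $f(X)=(X^{3}-mX)^{2}+c$). This is a routine expansion, matching exactly the ``by direct calculation'' step alluded to for \cite[Lemma 2.3]{Jones}. Since $s=n/\rho$, both factors lie in $K_{0}:=\mathbb{Q}(\sqrt{-c})$, so $f$ factors over $K_{0}$ as a product of two cubics whose root-sets are the negatives of one another. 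Because $f$ is irreducible over $\mathbb{Q}$, $-c$ is not a square and $K_{0}$ is a genuine quadratic field. The Galois group preserves the $\pm$-pairing and permutes the two conjugate cubic factors, hence carries the triangle $\{\alpha_{1},\alpha_{2},\alpha_{3}\}$ to itself or to its negative; every element therefore has diagonal sign part, i.e. $\Gal(f/\mathbb{Q})\subseteq D_{6}$.

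I expect the main obstacle to be not the algebra but the \emph{bookkeeping of the permutation action}: namely, pinning down $D_{6}$ inside $C_{2}\wr S_{3}$ as the diagonal-sign subgroup, justifying the existence of the $\Gal$-stable triangle partition in the forward direction, and confirming that the quadratic factorization field is precisely $\mathbb{Q}(\sqrt{-c})=\mathbb{Q}(\rho)$. Once this correspondence between ``$\Gal(f/\mathbb{Q})\subseteq D_{6}$'' and ``$f$ splits into two negation-conjugate cubics over $\mathbb{Q}(\sqrt{-c})$'' is in place, the parametrization by $(m,n)$ is exactly the coefficient comparison of these cubics, and the remaining verifications are mechanical.
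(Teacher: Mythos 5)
Your proof is correct, but it takes a genuinely different route from the paper's: the paper does not argue the lemma directly at all, instead reading it out of \cite[Lemma 2.3]{Jones}, where reducibility of a certain resolvent polynomial is shown to force the stated shape (the converse being a direct calculation), with the resolvent criterion, as in \cite{AJ}, being what detects $\Gal(f/\mathbb{Q})\le D_6$. You instead give a self-contained structural argument: you identify $D_6$ inside $C_2\wr S_3$ as the stabilizer of a partition of the six roots into two negation-conjugate triangles, extract $m=-\tau$ and $n=s\rho$ as Galois-invariant algebraic integers, and in the converse direction exhibit the factorization over $\mathbb{Q}(\sqrt{-c})$ via the identity $f(X)=(X^3-mX)^2-(sX^2+\rho)^2$ with $\rho^2=-c$, $s=n/\rho$, which I have checked and which matches the coefficient relations exactly. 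What your approach buys is a conceptual explanation of where $(m,n)$ come from (up to sign, they are coefficients of the cubic factor $X^3-sX^2-mX-\rho$), at the price of the permutation-theoretic bookkeeping you yourself flag; that bookkeeping is genuine and you should carry it out, but it closes cleanly. Namely, in the forward direction: since $f$ is irreducible, $\Gal(f/\mathbb{Q})$ is a \emph{transitive} subgroup of a conjugate of the hexagonal $D_6$, hence isomorphic to $C_6$, regular $S_3$, or $D_6$, and each of these preserves the unique system of blocks of size $3$ of its ambient $D_6$; one must then check this triangle system is transversal to the $\pm$-pairing. For $C_6$ and $D_6$ this is immediate because each admits a \emph{unique} system of blocks of size $2$, which is therefore the $\pm$-pairing, and the $D_6$-triangles are transversal to it; for regular $S_3$, which admits three pair systems, the ambient $D_6$-pairing need not be the $\pm$-pairing, but its unique triple system (the cosets of $C_3$) is transversal to all three of its pair systems, so the conclusion holds in every case, and after adjusting the signs of the $\alpha_i$ your "diagonal-sign" normalization is legitimate. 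Your converse is complete as written: separability of $f$ forces the root set of each cubic factor to be a transversal of the $\pm$-pairs (otherwise a root would occur twice in $T\cup(-T)$), and since $s=n/\rho$, any Galois element sends $s,\rho$ to $\pm s,\pm\rho$ with matching signs, hence permutes the two factors and acts with diagonal sign. One small citation slip: Lemma \ref{lem:monogen} assumes monogenicity, so for the bare implication "$f$ irreducible $\Rightarrow$ $g$ irreducible" you should either quote only the first lines of its proof or note that it is the standard degree argument.
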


\begin{lemma}
\label{lem:c61}
Let $m,n\in \mathbb{Z}$, $c|n^2$ and let $f(X)=X^6+(n^2/c-2m)X^4+(m^2-2n)X^2+c$. Then $\Gal(f/\mathbb{Q})\cong C_6$ if and only if all of the following hold:
\begin{itemize}
\item[i)] $-c$ is not a square.
\item[ii)] $g(X):=f(\sqrt{X})$ is irreducible.
\item[iii)]
$d(m,n,c):= -(4m^3c - m^2n^2 - 18mnc + 4n^3 + 27c^2)\in \mathbb{Z}$ is a square.
\end{itemize}
\end{lemma}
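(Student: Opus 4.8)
The plan is to translate each of the three conditions into a statement about the tower $\mathbb{Q}\subseteq M_0\subseteq\mathbb{Q}(\alpha)$, where $\alpha$ is a root of $f$, $\beta:=\alpha^2$ is a root of the cubic $g(X)=X^3+aX^2+bX+c$ (so $f(X)=g(X^2)$), and $M_0:=\mathbb{Q}(\beta)$. Once $f$ is irreducible, Lemma~\ref{lem:d6} applies to the given parametrized form and yields $\Gal(f/\mathbb{Q})\le D_6$ (dihedral of order $12$); moreover $f$ irreducible makes $\Gal(f)$ a transitive subgroup of $D_6$, and the transitive subgroups of $D_6$ on the six roots are precisely $C_6$, a single conjugacy class of $S_3$, and $D_6$ itself. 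The whole point is thus to detect the cyclic case among these three by the conditions (i)--(iii).

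First I would introduce the auxiliary cubic $h(X):=X^3+mX^2+nX+c$. A short symmetric-function computation shows that its roots $\gamma_1,\gamma_2,\gamma_3$ satisfy $\beta_k=-c/\gamma_k^2$ (with $\{\beta_k\}$ the roots of $g$); in particular $d(m,n,c)=\mathrm{disc}(h)$. Using $\gamma_1\gamma_2\gamma_3=-c$ together with $\prod_{i<j}(\gamma_i+\gamma_j)=h(-m)=c-mn$, one derives the clean identity
$$\mathrm{disc}(g)=\left(\frac{c-mn}{c}\right)^2 d(m,n,c).$$
Hence, as long as $c\ne mn$ (which holds whenever $g$ is separable, i.e.\ irreducible), $\mathrm{disc}(g)$ is a rational square if and only if $d(m,n,c)$ is. This identifies condition (iii) with ``$\mathrm{disc}(g)$ is a square'', equivalently with $M_0/\mathbb{Q}$ being a \emph{Galois} (cyclic) cubic field, which is exactly the subfield structure forced by a $C_6$-extension.

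Next I would treat irreducibility and condition (i). Since $g$ reducible forces $f=g(X^2)$ reducible, (ii) is necessary for $f$ to be irreducible. Conversely, given (ii), the norm $N_{M_0/\mathbb{Q}}(\beta)=\prod_k\beta_k=-c$ shows that if $-c$ is not a square then $\beta$ is not a square in $M_0$, whence $[\mathbb{Q}(\alpha):M_0]=2$ and $f$ is irreducible; so (i)$+$(ii) give irreducibility. For the necessity of (i) when $\Gal(f)\cong C_6$, I would use the discriminant formula $\mathrm{disc}(f)=-64\,c\,\mathrm{disc}(g)^2$, obtained by pairing the roots $\pm\sqrt{\beta_k}$: as $C_6$ acts by its regular representation it contains a $6$-cycle, hence an odd permutation, so $\Gal(f)\not\subseteq A_6$ and $\mathrm{disc}(f)$ is a non-square, which by the formula forces $-c$ to be a non-square.

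Finally I would assemble both directions. For sufficiency, (i)$+$(ii) make $f$ irreducible of degree $6$, so by Lemma~\ref{lem:d6} $\Gal(f)\in\{C_6,S_3,D_6\}$, and (iii) makes $M_0$ a Galois cubic subfield, i.e.\ the corresponding index-$3$ subgroup is normal; a direct check shows that among $C_6,S_3,D_6$ only $C_6$ has a normal subgroup of index $3$ (as $S_3$ has no normal subgroup of order $2$, and $D_6\cong S_3\times C_2$ no normal subgroup of order $4$), giving $\Gal(f)\cong C_6$. For necessity, $\Gal(f)\cong C_6$ forces $f$ irreducible (hence (ii)), forces $-c$ to be a non-square by the discriminant argument (hence (i)), and, being abelian, forces $M_0/\mathbb{Q}$ to be Galois, i.e.\ $d(m,n,c)$ to be a square (hence (iii)). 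I expect the main obstacle to be the bookkeeping around $h$: verifying the root relation $\beta_k=-c/\gamma_k^2$ and the ensuing discriminant identity is the one genuinely computational step, and one must check that the degenerate locus $c=mn$ is excluded exactly by the irreducibility obtained from (i)$+$(ii); the group-theoretic finish and the $A_6$/discriminant argument are then routine.
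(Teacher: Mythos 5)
Your proof is correct and follows essentially the same route as the paper: necessity of (i) via $\Delta(f)=-64\,c\,\Delta(g)^2$ and $C_6\not\leq A_6$, identification of (iii) with the cubic subfield $\mathbb{Q}(\alpha^2)$ being cyclic, and sufficiency via Lemma \ref{lem:d6} plus elementary group theory in $D_6$ (the paper concludes by noting that $C_6$ is the only subgroup of $D_6$ with a $C_6$-quotient, using the compositum of $\mathbb{Q}(\sqrt{-c})$ with the splitting field of $g$, whereas you use normality of the index-$3$ subgroup among the transitive subgroups $C_6$, $S_3$, $D_6$ --- an equivalent finish). Your write-up in fact supplies two details the paper leaves implicit: the verification, via the auxiliary cubic $h(X)=X^3+mX^2+nX+c$ and the root relation $\beta_k=-c/\gamma_k^2$, of the identity $\mathrm{disc}(g)=\left(\frac{c-mn}{c}\right)^2 d(m,n,c)$ (the paper only asserts that the discriminant of $g$ equals $d(m,n,c)$ up to squares), and the norm argument showing that (i) and (ii) together force $f$ to be irreducible, which is genuinely needed before Lemma \ref{lem:d6} can be invoked in the converse direction.
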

\begin{proof}
Assume first that $\Gal(f/\mathbb{Q})\cong C_6$. The discriminant of $f$ equals $-c$ up to a square factor. Since $C_6$ is not contained in the alternating group $A_6$, a $C_6$-extension of $\mathbb{Q}$ cannot have square discriminant, whence i) is necessary. Furthermore, 
the cubic subextension generated by a root of $g(X)$ needs to be Galois of group $C_3$. This means that ii) and iii) are necessary, the latter since the discriminant of $g$ equals $d(m,n,c)$ up to squares.
Assume now conversely that i), ii) and iii) hold. Then the splitting field of $f$ contains the quadratic extension $\mathbb{Q}(\sqrt{-c})/\mathbb{Q}$ and a $C_3$-subextension (namely, the splitting field of $g$), i.e., contains a $C_6$ subextension. 
From Lemma \ref{lem:d6}, we know that $\Gal(f/\mathbb{Q})\le D_6$, and since the only subgroup of $D_6$ possessing a quotient $C_6$ is $C_6$ itself, it follows that  $\Gal(f/\mathbb{Q})= C_6$.
\end{proof}

\section{Proof of the main results}


\begin{lemma}
\label{lem:evenmono}
Let $\ell<q$ be prime numbers, let $g(X)=X^{q}+ a_{q-1}X^{q-1}+\dots + a_1X + a_0\in \mathbb{Z}[X]$, and assume that $f(X):=g(X^\ell)$ is irreducible and monogenic, and such that $\Gal(f/\mathbb{Q})$ a subgroup of the wreath product $S_{\ell} \wr C_q (=S_\ell^q\rtimes C_q)$ not containing $C_\ell\wr C_q$. Then $f$ is $p$-Eisenstein for every prime $p|a_0$. I.e., $a_0$ is a squarefree number dividing all other coefficients $a_i$. Moreover, every prime divisor of $a_0$ is either $q$ or congruent to $1$ modulo $q$.
\end{lemma}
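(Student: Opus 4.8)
The plan is to reduce the Eisenstein conclusion to a total-ramification statement and then invoke Lemma \ref{lem:eisen}. By Lemma \ref{lem:monogen}, $g$ is irreducible and monogenic; writing $\beta=\alpha^\ell$ for a root $\alpha$ of $f$ and setting $K_0=\mathbb{Q}(\beta)\subseteq K=\mathbb{Q}(\alpha)$, irreducibility of $f$ forces $\Gal(f/\mathbb{Q})$ (which is transitive on the $q$ blocks) to surject onto the top quotient $C_q$ of $S_\ell\wr C_q$, so $\Gal(g/\mathbb{Q})\cong C_q$ and $K_0/\mathbb{Q}$ is cyclic of prime degree $q$. Fix a prime $p\mid a_0$. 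Since $a_0=\pm N_{K_0/\mathbb{Q}}(\beta)=\pm N_{K/\mathbb{Q}}(\alpha)$, both $\beta$ and $\alpha$ have norm divisible by $p$, so by Lemma \ref{lem:eisen} it suffices to show that $p$ is totally ramified in $K$.

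First I would pin down the splitting of $p$ in $K_0$. As $g$ is monogenic, this is governed by $g\bmod p$, and since $p\mid g(0)$ the reduction $g\bmod p$ has $0$ as a root. In a cyclic extension of prime degree $q$ the prime $p$ is totally split, inert, or totally ramified; the inert case is impossible, since there $g\bmod p$ would be irreducible of degree $q\ge 5$ and hence have no root in $\mathbb{F}_p$. Thus $p$ is totally split or totally ramified in $K_0$.

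The crux, and the step I expect to be the main obstacle, is ruling out the totally split case via the hypothesis $\Gal(f/\mathbb{Q})\not\supseteq C_\ell\wr C_q$. Suppose $p\mathcal{O}_{K_0}=\mathfrak{p}_1\cdots\mathfrak{p}_q$ splits completely. Then $p$ is unramified, so $g\bmod p$ is separable and $0$ is a simple root; monogenicity of $g$ then gives, after relabeling, $v_{\mathfrak{p}_1}(\beta)=1$ and $v_{\mathfrak{p}_j}(\beta)=0$ for $j\ge 2$, and this valuation $1$ is coprime to $\ell$. Since $\Gal(K_0/\mathbb{Q})\cong C_q$ acts regularly on $\{\mathfrak{p}_1,\dots,\mathfrak{p}_q\}$, each conjugate $\beta_i$ of $\beta$ has $\mathfrak{p}_j$-valuation $1$ for exactly one $j$ and $0$ otherwise, with the correspondence being a bijection. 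A valuation count at the $\mathfrak{p}_j$ then shows that no product $\prod_i\beta_i^{e_i}$ with exponent vector in $\mathbb{F}_\ell^q\setminus\{0\}$ can lie in $(K_0^\times)^\ell$; hence the radicals $\beta_1^{1/\ell},\dots,\beta_q^{1/\ell}$ are multiplicatively independent modulo $\ell$-th powers. By Kummer theory this realizes $\Gal(L/K_0(\zeta_\ell))\cong C_\ell^q$ (for $L$ the splitting field of $f$) inside $\Gal(f/\mathbb{Q})$ as the group of block-diagonal $\ell$-cycles, i.e. $\Gal(f/\mathbb{Q})\supseteq C_\ell\wr C_q$, a contradiction. (For $\ell=2$, where $C_\ell\wr C_q=S_\ell\wr C_q$, the hypothesis just says $\Gal(f/\mathbb{Q})$ is a proper subgroup, and the valuation count is precisely the independence of the $\sqrt{\beta_i}$.)

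Therefore $p$ is totally ramified in $K_0$. Applying Lemma \ref{lem:eisen} to $(K_0,\beta,p)$ shows $g$ is $p$-Eisenstein, so $\beta$ is a uniformizer at the unique prime $\mathfrak{p}\mid p$; then $X^\ell-\beta$ is $\mathfrak{p}$-Eisenstein, whence $\mathfrak{p}$ is totally ramified in $K/K_0$ and $p$ is totally ramified in $K$. A second application of Lemma \ref{lem:eisen}, now to $(K,\alpha,p)$, yields that $f$ is $p$-Eisenstein. For the congruence condition, total ramification of $p$ in the cyclic degree-$q$ field $K_0$ means, when $p\ne q$, that $K_0\otimes\mathbb{Q}_p$ is a tamely and totally ramified degree-$q$ Galois extension of $\mathbb{Q}_p$, hence cyclic and containing $\zeta_q$, which forces $q\mid(p-1)$; thus every prime divisor of $a_0$ is $q$ or $\equiv 1\pmod q$. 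Finally, that $f$ is $p$-Eisenstein for all $p\mid a_0$ gives at once that $a_0$ is squarefree and divides every coefficient $a_i$.
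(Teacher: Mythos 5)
The overall architecture of your proof is sound, but there is a genuine gap at the crux: the claim that monogenicity of $g$ gives $v_{\mathfrak{p}_1}(\beta)=1$ in the totally split case. It does not. When $p$ is totally split and unramified, monogenicity of $g$ at $p$ amounts to $g\bmod p$ being separable, i.e., the local roots $c_1,\dots,c_q\in\mathbb{Z}_p$ of $g$ (with $\beta\mapsto(c_1,\dots,c_q)$ under $K_0\otimes\mathbb{Q}_p\cong\mathbb{Q}_p^q$) being pairwise incongruent modulo $p$; this is perfectly compatible with $k:=v_p(c_1)=v_{\mathfrak{p}_1}(\beta)$ being any integer $\ge 1$. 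Indeed, starting from any monogenic cyclic $g_0$ with $p$ totally split dividing $g_0(0)$ and replacing the monogenerator $\beta_0$ by $\beta_0-a$ for a suitable $a\in\mathbb{Z}$ with $a\equiv c_1\bmod p^\ell$ preserves monogenicity of $g$ and all hypotheses on $g$, while making $\ell\mid k$. In that situation your valuation count collapses: $\beta$ is locally a unit times an $\ell$-th power at every prime above $p$, the radicals $\beta_i^{1/\ell}$ need not be independent modulo $\ell$-th powers, and no contradiction with the group hypothesis is reached. What rescues the argument is monogenicity of $f$, which you never invoke at this point: since $f\bmod p$ has the root $0$ with multiplicity $\ge\ell$, we get $p\mid\Delta(f)=\Delta(K)$, so $p$ ramifies in $K=\mathbb{Q}(\alpha)$; but if $\ell\mid k$, every local extension $\mathbb{Q}_p(c_j^{1/\ell},\zeta_\ell)/\mathbb{Q}_p$ is unramified (each is generated by $\ell$-th roots of units and by $\zeta_\ell$, and everything is tame because total splitness forces $p\ge q>\ell$), so $p$ would be unramified in the whole splitting field $L$ --- a contradiction. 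Hence $\ell\nmid v_{\mathfrak{p}_1}(\beta)$, which is all your exponent count actually needs; the exact value $1$ is irrelevant.

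With that repair, your route is correct and genuinely different from the paper's. In the case where $g$ is separable modulo $p$, the paper compares the $p$-exponent of $\Delta(f)$ with the maximal tame field-discriminant exponent to force the inertia group at $p$ to be generated by an $\ell$-cycle, and then applies permutation group theory (a transitive subgroup of $S_\ell\wr C_q$ containing an $\ell$-cycle contains the full base group $C_\ell^q$); you instead realize $C_\ell^q$ directly as $\Gal(L/K_0(\zeta_\ell))$ by Kummer theory. Note that both mechanisms run on monogenicity of $f$, which is exactly what your attribution of the key step to monogenicity of $g$ obscures. Your approach also has the pleasant feature that the paper's separate case $p\le\ell$ disappears, since the split case is vacuous for $p<q$. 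Two smaller slips to fix: the inert case needs only $q\ge 2$, not $q\ge 5$ (the main application is $q=3$); and the multiplicative independence must be verified modulo $(K_0(\zeta_\ell)^\times)^\ell$ rather than $(K_0^\times)^\ell$ --- harmless here, since $K_0(\zeta_\ell)/K_0$ is unramified above $p$ for $p\ne\ell$, so the valuation argument transfers verbatim.
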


\begin{remark}
\label{rem:evenmono}
 Our main focus for application of Lemma \ref{lem:evenmono} will be on the case $\ell=2, q=3$. In fact, for primes $q\ge 5$, it is known by \cite{Gras} that the only monogenic $C_q$ number fields (existence of which is necessary by Lemma \ref{lem:monogen} to obtain a monogenic $f$ as above) are the maximal real subfields of cyclotomic fields $\mathbb{Q}(\zeta_{2q+1})$, where $2q+1$ must additionally be prime. Nevertheless, classification of monogenic polynomials $f=g(X^\ell)$ as above for $\deg(g)=q\ge 5$ may be of interest, see Lemma \ref{lem:c10andmore}.
 A useful observation for consideration of certain {\it non-cyclic} groups is the well-known fact that the inertia group at $p$ in the splitting field of a $p$-Eisenstein polynomial must be a transitive subgroup of the Galois group, and cyclic as soon as $p$ is tamely ramified. In the only possible case $p=q$ of wild ramification allowed by the assertion of Lemma \ref{lem:evenmono}, the inertia group would nevertheless have to be a subgroup of $S_{\ell}\wr C_q$ 
containing (the wild inertia group) $C_q$ as a normal subgroup and therefore be contained in $S_{\ell}\times C_q$. Transitivity then still implies the existence of a subgroup isomorphic to $C_{\ell}\times C_{q}\cong C_{\ell q}$. This means that for those groups $G$ as in Lemma \ref{lem:evenmono} which {\it do not} possess any elements of order $\ell q$ (e.g., the group $A_4$ in its transitive action on six points), there cannot be any prime $p$ as in the assertion of Lemma \ref{lem:evenmono}, i.e., one necessarily has $a_0=\pm 1$ for all monogenic  $f(X)=g(X^\ell)$ with Galois group $G$.  
\end{remark}

\begin{proof}[Proof of Lemma \ref{lem:evenmono}]
Let $\alpha$ be a root of $f$, let $p$ be a prime divisor of $a_0$, and set $g(X):=f(\sqrt[\ell]{X})$. First note that the second assertion follows from the first by considering the splitting field of $g$. By assumption, this must be a $C_q$-extension of $\mathbb{Q}$, and by the first assertion, its completion at $p$ is a totally ramified $C_q$-extension of $\mathbb{Q}_p$. If $p\ne q$, then this completion is thus totally tamely ramified, and it is well-known that such an extension is generated over $\mathbb{Q}_p$ by the $q$-th root of a suitable $p$-adic number. But then, it can only be cyclic if $\mathbb{Q}_p$ contains the $q$-th roots of unity, implying $p\equiv 1 \bmod q$.

To prove the first assertion, it suffices by Lemma \ref{lem:eisen} to show that $p$ is totally ramified in $\mathbb{Q}(\alpha^\ell)$ (since then $\alpha^\ell$ is of norm divisible by $p$ and therefore $g$, and thus $f$ is $p$-Eisenstein). 
Assume first that $p\le\ell$. Then the minimal polynomial $g$ of $\alpha^\ell$ has the root $0$ modulo $p$ and must thus be inseparable modulo $p$; indeed, a separable polynomial in $\mathbb{F}_p[X]$ of degree $q>p$ cannot split into linear factors, and thus the Frobenius at $p$ in $\mathbb{Q}(\alpha^\ell)$ would be a non-identity element fixing at least one point, something impossible in a $C_q$-extension. Now inseparability of $g(X)\bmod p$ together with the monogenicity of $g$ guaranteed by Lemma \ref{lem:monogen} implies that $p$ is (totally) ramified in $\mathbb{Q}(\alpha^\ell)/\mathbb{Q}$, completing the proof of the case $p\le\ell$.\footnote{Of course, the second assertion now implies that this case does not in fact occur.} 
So assume from now on $p> \ell$.

By assumption, $f$ is inseparable modulo $p$, with at least an $\ell$-fold root at $X=0$. If $g$ were also inseparable, then monogenicity of $f$ together with Lemma \ref{lem:monogen} would imply that $p$ is ramified in $\mathbb{Q}(\alpha^\ell)/\mathbb{Q}$, and we are done. So assume that $g$ is separable. But then $f(X)=g(X^\ell)$ can have at most (and in fact exactly, since $f$ is inseparable) one multiple root in $\overline{\mathbb{F}_p}$ (namely, at $X=0$, of multiplicity exactly $\ell$).\footnote{Here we have used $p\ne \ell$, since the map $X\mapsto X^\ell$ is then $\ell$-to-$1$ outside $X=0$.} Since $p>\ell$, the prime $p$ is tamely ramified in the splitting field of $f$ of discriminant exponent at most $\ell-1$, with equality if and only if the inertia group at $p$ is generated by an $\ell$-cycle. Monogenicity of $f$ thus implies that the latter must be the case. It is however an elementary exercise in permutation group theory to show that a transitive subgroup of $S_\ell\wr C_q$ containing one $\ell$-cycle must contain a full subgroup $C_\ell^q$ (by transitive action of $C_q$ on the $q$ copies of $S_\ell$), contradicting our assumption.
\end{proof}

The following lemma, using again the shape for $f(X)$ identified by Lemma \ref{lem:d6}, contains the crucial reduction of the family of $C_6$ polynomials in Theorem \ref{thm:main} to a very restricted subfamily.

\begin{lemma}
\label{lem:c6mono}
Let $m,n\in \mathbb{Z}$ and $c|n^2$ such that $f(X)=X^6+(n^2/c-2m)X^4+(m^2-2n)X^2+c$ is irreducible and monogenic, and has Galois group $C_6$. Then 
either  $\{m,n\} = \{-1,-2\}$ and $c=1$, or $mn=0$ and $c\equiv 1\bmod 4$. 
\end{lemma}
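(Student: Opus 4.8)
The plan is to peel away the freedom in $(m,n,c)$ in three stages: Eisenstein constraints, a discriminant computation at the prime $2$, and the arithmetic of the cubic subfield. I would begin by applying Lemma~\ref{lem:evenmono} with $\ell=2$, $q=3$: since $C_6$ embeds in $C_2\wr C_3$ without containing it, the hypotheses hold, so $f$ is $p$-Eisenstein for every $p\mid c$, and $c$ is squarefree with every prime factor equal to $3$ or congruent to $1$ modulo $3$; in particular $c$ is odd. As $c$ is squarefree and divides $n^2$, it divides $n$; then $c\mid b=m^2-2n$ together with $c\mid 2n$ forces $c\mid m^2$, hence $c\mid m$. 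I write $m=cm'$, $n=cn'$.

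Next I would localize at $2$. For $f=g(X^2)$ one has $\Delta(f)=-2^{6}c\,\Delta(g)^2$, and by Lemma~\ref{lem:monogen} the cubic $g$ is a monogenic generator of the cyclic cubic field $k=\mathbb{Q}(\gamma)$, where $\gamma:=\alpha^2$; since every cyclic cubic field is unramified at $2$, the discriminant $\Delta(g)=\Delta(k)$ is odd, so $v_2(\Delta(f))=6$. Monogenicity of $f$ forces $v_2(\Delta(K))=6$, so $2$ ramifies in $K$. Because $2$ is unramified in $k$, the inertia group at $2$ is contained in $\Gal(K/k)\cong C_2$; being non-trivial it equals $C_2$, which surjects onto $\Gal(\mathbb{Q}(\sqrt{-c})/\mathbb{Q})$. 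Hence $2$ ramifies in $\mathbb{Q}(\sqrt{-c})$, i.e.\ $\operatorname{disc}\mathbb{Q}(\sqrt{-c})$ is even and (using that $c$ is odd) $c\equiv1\bmod 4$. This is the congruence required in both alternatives, so it remains to show that $mn\neq0$ forces $\{m,n\}=\{-1,-2\}$ and $c=1$.

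For this I would exploit the arithmetic of $k$. Writing $\gamma_2,\gamma_3$ for the non-trivial conjugates of $\gamma$, one has $\gamma_2\gamma_3=\gamma^2+a\gamma+b$, and since $K=k(\sqrt{-c})=k(\sqrt{\gamma})$ the classes of $\gamma$ and $-c$ in $k^*/k^{*2}$ coincide, so $\gamma_2\gamma_3=-c/\gamma$ is a square $\delta^2$ with $\delta\in\mathcal{O}_k=\mathbb{Z}[\gamma]$; moreover $\delta$ may be taken to be a root of the ``dual'' cubic $T^3+mT^2+nT+c$, which is precisely how the parameters $m,n$ re-enter (this is the content of the parametrization in Lemma~\ref{lem:d6}). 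Combined with condition (iii) of Lemma~\ref{lem:c61} — that $d(m,n,c)$ be a perfect square — the monogenicity hypothesis thus reduces to a Diophantine problem: the two cubics $g$ and $T^3+mT^2+nT+c$ generate the same cyclic cubic field, share the constant term $c$, and $g$ must define the maximal order.

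The main obstacle is the finiteness of this problem for $mn\neq0$. Substituting $m=cm'$, $n=cn'$ turns condition (iii) into the requirement that $-(4c^2m'^3-c^2m'^2n'^2-18cm'n'+4cn'^3+27)$ be a perfect square, while monogenicity of $g$ pins its conductor to $|c|$ times the square root of that quantity. I expect the endgame to consist of bounding $|m|,|n|,|c|$ — treating the square condition as a family of cubic (Thue/elliptic) equations and using the conductor constraint to control the remaining freedom — followed by a finite search showing that the only solution with $mn\neq0$ is $\{m,n\}=\{-1,-2\}$, $c=1$. Making this bound effective, rather than merely reducing to finiteness in principle, is the crux, as it is exactly what separates the sporadic solution from the infinite $mn=0$ family.
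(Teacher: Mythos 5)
Your first two stages are sound and essentially parallel the paper's opening moves: Lemma \ref{lem:evenmono} with $\ell=2,\ q=3$ gives $c$ odd, squarefree and dividing all coefficients, whence $c\mid m$ and $c\mid n$; and your localization at $2$ (via $\Delta(f)=-2^6c\,\Delta(g)^2$, the fact that cyclic cubic fields are unramified at $2$, and monogenicity of $g$ from Lemma \ref{lem:monogen}) correctly forces $2$ to ramify in $\mathbb{Q}(\sqrt{-c})$ and hence $c\equiv 1\bmod 4$. The paper reaches the same congruence slightly differently, by noting $2^6\mid\Delta(f)$ exceeds the maximal tame exponent $5$ for a $C_6$ field, so $2$ is wildly ramified; both routes work.

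However, your third stage contains a genuine gap: the entire reduction of the case $mn\neq 0$ to the single solution $\{m,n\}=\{-1,-2\}$, $c=1$ is only a plan (``I expect the endgame to consist of bounding $|m|,|n|,|c|$\dots''), and the plan as stated would not obviously succeed. After substituting $m=jc$, $n=kc$, the square condition on $-\tilde{d}(j,k,c)$ defines a \emph{surface} in three parameters, not a family of curves with an evident effective finiteness theory; you give no mechanism that isolates $mn\neq 0$ from the genuinely infinite $mn=0$ family. (Your statement that monogenicity pins the conductor of $g$ to $|c|\sqrt{-\tilde d}$ is also off: $\Delta(g)=c^2(jkc-1)^2(-\tilde d)$, and the omitted factor $jkc-1$ is exactly the quantity everything hinges on.) The paper's key idea, absent from your proposal, is local rather than Diophantine: take any prime $p\mid jkc-1$; then $c\equiv \frac{1}{jk}\bmod p$ forces $g(X)\equiv (X-\frac{1}{k})^2(X+\frac{k}{j})\bmod p$, and since $g$ generates a monogenic $C_3$-field, $p$ must be \emph{totally} ramified there, so the factorization must be a perfect cube, giving $j\equiv -k^2\bmod p$; feeding this back yields $p\mid \tilde d$, hence $p^2\mid\tilde d$ because $-\tilde d$ is a square, hence $p^8\mid\Delta(f)$ from $\Delta(f)=-64c^5(jkc-1)^4\tilde d^2$. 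This exceeds the maximal field-discriminant exponent of a $C_6$ field ($5$ tame, $9$ wild) for every $p$ except $p=3$, which a short computation modulo $9$ rules out as well (one gets $3^{12}\mid\Delta(f)$). Therefore $jkc-1=\pm 1$, i.e.\ $jkc\in\{0,2\}$, and the lemma follows from a trivial finite check ($c=-1$ excluded by Lemma \ref{lem:c61}, and $\{m,n\}=\{1,2\}$ excluded since $-\tilde d=-23$ is not a square). Without this argument, or an effective substitute for it, your proposal does not prove the statement.
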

\begin{proof} 
Due to Lemma \ref{lem:evenmono}, we may and will assume that $c$ is odd, squarefree and divides the other coefficients. Since $c|n^2$ was already assumed, this implies $c|n$ and $c|m$, i.e., $m=jc$ and $n=kc$ for suitable $j,k\in \mathbb{Z}$. We then have \begin{equation}
\label{eq:delta}
\Delta(f)=-64 c^5 (jkc-1)^4  (\tilde{d}(j,k,c))^2,
\end{equation}
 where $\tilde{d}(j,k,c):=4j^3c^2 - j^2k^2c^2 - 18jkc + 4k^3c + 27$. 
 It follows readily that $c\equiv 1\bmod 4$, since $2$ must be wildly ramified and hence ramify in the quadratic subfield $\mathbb{Q}(\sqrt{\Delta(f)})=\mathbb{Q}(\sqrt{-c})$. Moreover, the discriminant of the cubic $g(X)=f(\sqrt{X})$ equals $-\tilde{d}(j,k,c)$ up to square factors. As in Lemma \ref{lem:c61}, this enforces $-\tilde{d}(j,k,c)$ to be a square. Now let $p$ be a prime divisor of $jkc-1$, i.e., $j$, $k$ and $c$ are all coprime to $p$ and $c\equiv \frac{1}{jk}\bmod p$. Plugging this into the cubic $g(X)$, we obtain $g(X)\equiv (X-\frac{1}{k})^2(X+\frac{k}{j})\bmod p$. Monogenicity implies that $p$ ramifies in the splitting field of $g$, and since this is a $C_3$-extension of $\mathbb{Q}$, $p$ must be totally ramified. Thus, necessarily $g(X)\equiv (X-\frac{1}{k})^3 \bmod p$, i.e., $j\equiv -k^2\bmod p$. Plugging this, and the condition $c\equiv -\frac{1}{k^3}\bmod p$ thus obtained, into the definition of $\tilde{d}(j,k,c)$, we obtain $\tilde{d}\equiv 0 \bmod p$, and thus even $\tilde{d}\equiv 0\bmod p^2$, since $-\tilde{d}$ is a square. Therefore, from \eqref{eq:delta}, $p^8 | \Delta(f)$, and if $p=2$, then even $p^{14}| \Delta(f)$. But the maximal discriminant exponent for a tamely (resp., wildly) ramified prime in the field discriminant of a $C_6$-extension is $5$ (resp., $9$). Due to the monogenicity assumption, this leaves only the option $p=3$. For this, we can verify ad hoc (going through all possibilities modulo $9$) that with our restrictions on $j$, $k$ and $c$, the condition $9|\tilde{d}(j,k,c)$ implies $9|jkc-1$, whence in \eqref{eq:delta}, we actually get $3^{2\cdot 4 + 4}=3^{12}|\Delta(f)$, again contradicting monogenicity. In total, we have obtained that $jkc-1$ cannot have any prime divisors, i.e., $jkc=0$ or $jkc=2$. The first option corresponds to $mn=0$ (since obviously $c\ne 0$), whereas the latter one, due to the condition that $c$ is odd, corresponds to $c=\pm 1$ and $jk=\pm 2$. Since $c=-1$ is impossible by Lemma \ref{lem:c61}, it follows that $c=1$ and $\{m,n\} = \{-1,-2\}$ or $\{1,2\}$. But the last case would lead to $-\tilde{d}(j,k,c)=-23$, which is not a square. This completes the proof.
\end{proof}

\begin{proof}[Proof of Theorem \ref{thm:main}]
Lemma \ref{lem:c6mono} has reduced the consideration to $mn=0$ (and $c\equiv 1\bmod 4$), or $c=1$ and $\{m,n\}=\{-1,-2\}$ for the shape of $f$ as in that lemma. The latter case leads to the pair of reciprocal polynomials $f(X)=X^6+5X^4+6X^2+1$ and $f(X)=X^6+6X^4+5X^2+1$, both of which are easily verified to be monogenic.\footnote{Note that the case $c=1$ has also been dealt with in the proof of \cite[Lemma 2.4]{Lavallee}.}  
The two cases $n=0$ and $m=0$ correspond exactly to Theorem 1.1(2) and Theorem 1.1(3) of \cite{Jones}, respectively, so that we could end our proof here by citing these results. Alternatively, a short argument for the latter two cases goes as follows. 
Assume first $m=0$ and $n\ne 0$. By Lemma \ref{lem:evenmono}, $n=kc$ for some integer $k$, so that $f(X)=X^6+k^2c X^4 - 2kc X^2 + c$, and $\Delta(f)=-64 c^5 (4k^3 c+27)^2$.  Here $-4k^3c-27$ needs to be a square, since this equals, up to squares, the discriminant of the cubic polynomial $g(X)=f(\sqrt{X})$.  I.e., $c=-\frac{y^2+27}{4k^3}$ for some integer $y$. Plugging this into the defining equation of $g$ and reducing modulo any prime divisor $p$ of $y$, we get $g(X)\equiv (X-\frac{3}{k})^2(X-\frac{3}{4k})\bmod p$. As before, the fact that $g$ is a monogenic $C_3$-polynomial implies that this factorization must actually be the cube of a linear, and thus necessarily $p=3$ (note that the case $p|k$ does not require separate attention, since then $p|y^2+4k^3c=-27$). So $y^2=-4k^3c-27$ is a power of $9$. But once again, the maximal $3$-power dividing the field discriminant of a  $C_6$-extension is $3^9$. Thus, $-4k^3c - 27 \in \{1,9,81\}$. Since $c$ is additionally squarefree (by Lemma \ref{lem:evenmono}) and congruent to $1$ modulo $4$ (by Lemma \ref{lem:c6mono}), this leaves only the possibilities $(k,c) = (-3,1)$ and $(k,c)=(1,-7)$, leading to $f(X)=X^6 + 9X^4 + 6X^2 + 1$ and $f(X)=X^6-7X^4+14X^2-7$ respectively. We may verify directly that those are indeed both monogenic.

Analogously, in the case $n=0$ and $m\ne 0$, we know that $m=jc$ for some integer $j$, and thus get $f(X)=X^6-2jcX^4+j^2c^2X^2+c$, and $\Delta(f)=-64c^5(4j^3c^2+27)^2$, where $-4j^3c^2-27 = -4jm^2-27$ is necessarily a square, i.e., $j=-\frac{y^2+27}{4m^2}$ for some integer $y$. Plugging this (as well as $c=\frac{m}{j}$) in the defining equation of $g$ and reducing modulo any prime divisor $p$ of $y$, we get $g(X)\equiv (X-\frac{m}{3})^2(X-\frac{4m}{3}) \bmod p$, implying that $p=3$ (note that the case $p|m$ does note require separate attention, since then $p|y^2+4jm^2=-27$). Thus, $-4j^3c^2-27\in \{1,9,81\}$ in analogy with the previous case. Since $c$ is squarefree and $c\equiv 1\bmod 4$, this leaves only the possibilities $(j,c) = (-3,1)$ and $(j,c) = (-1,-3)$. These yield $f(X)=X^6 + 6X^4 + 9X^2 + 1$ and $f(X)=X^6-6X^4+9X^2-3$,  both of which are indeed monogenic.
\end{proof}

\begin{proof}[Proof of Lemma \ref{lem:c10andmore}]
By Lemma \ref{lem:monogen}, $g(X):=f(\sqrt{X})$ needs to be a monogenic cyclic polynomial of degree $q$, which by \cite{Gras} enforces that ($2q+1$ is prime and) the splitting field of $g$ is $K=\mathbb{Q}(\zeta_{2q+1})\cap \mathbb{R}$.
For a fixed number field $K$, it is moreover well-known (\cite{Gyory}) that there are at most finitely many monogenerators up to equivalence, i.e., there exists a finite set $S\subset \mathcal{O}_K$ such that every monogenerator of $\mathcal{O}_K$ is of the form $\pm \alpha + a$ for some $\alpha\in S$ and $a\in \mathbb{Z}$. This translates to saying that there is a finite set $\mathcal{M}\subset \mathbb{Z}[X]$ of irreducible degree-$q$ polynomials $h\in \mathbb{Z}[X]$ such that every $g\in \mathbb{Z}[X]$ with $g(X^2)$ cyclic and monogenic must be of the form $g(X)=h(\pm X-a)$ for some $h\in \mathcal{M}$ and $a\in \mathbb{Z}$. On the other hand, Lemma \ref{lem:evenmono} together with the fact that $2q+1$ is the only ramified prime in $K$ implies that, in order for $f$ to be monogenic with Galois group $C_{2q}$, the constant coefficient of $g(X)=h(\pm X-a)$ must be $\pm 1$ or $\pm(2q+1)$, which is certainly possible for at most finitely many $a\in \mathbb{Z}$. This shows the finiteness assertion. To determine the explicit set of polynomials in the range $5\le q\le 19$, note that only $q=5$ and $q=11$ are relevant due to the requirement of $2q+1$ being prime. For these, the full set of monogenerators (up to equivalence) of $\mathcal{O}_K$ is known explicitly by \cite{MR}. From this, identifying the relevant $f(X)=g(X^2)$ merely amounts to a short computation.
\end{proof}

\begin{remark}
We end this section by remarking that, up to discarding reciprocals of already identified polynomials, all monogenic $f(X)=g(X^2)$ identified in Theorem \ref{thm:main} and Lemma \ref{lem:c10andmore} are such that $g$ is the minimal polynomial of $-\zeta_d-\zeta_d^{-1}\pm 2$ for $d\in \mathbb{N}$ with $\varphi(d) = 2\deg(g)$ (i.e., $d\in \{7,9\}$ for $\deg(g)=3$, and $d=2q+1$ for $\deg(g)=q\ge 5$ a Sophie Germain prime). Extending this to all primes $q\ge 5$ should be possible at least depending on existing conjectures on the set of all monogenerators of real cyclotomic fields (see \cite{MR}). Whether there was any reason to expect the analogous result a priori for $\deg(g)=3$, I do not know.
\end{remark}

\section{Remarks on even sextics with other Galois groups}
\label{sec:general}
As mentioned in \cite{Lavallee}, there are a total of eight permutation groups occurring as the Galois group of some irreducible even sextic polynomial over $\mathbb{Q}$. Of those, five were identified in \cite{Lavallee} to occur infinitely often as the Galois group of a {\it monogenic} even sextic (even with the additional restriction that the constant coefficient should be $\pm 1$), the remaining three being $C_6$, $S_3$ and $S_4$ acting as $6T8$ (with the transitive group label as used, e.g., in GAP or Magma). Theorem \ref{thm:main} completely settles the case $C_6$. The remaining two groups can be handled without much extra effort, thereby yielding the following:
\begin{theorem}
\label{thm:general}
Let $G<S_6$ be a group occurring as the Galois group of an irreducible even sextic $f=X^6+aX^4+bX^2+c\in \mathbb{Z}[X]$. Then one of the following holds:
\begin{itemize}
\item[a)] $G=S_4\times C_2$, $A_4\times C_2$, $A_4$, $D_6$, or $S_4$ in its action as either $6T7$ or $6T8$; and in each case there are infinitely many monogenic even sextics with Galois group $G$.
\item[b)] $G=C_6$, and there are a total of six monogenic even sextics with Galois group $G$.
\item[c)] $G=S_3$, and there are no monogenic even sextics with Galois group $G$.
\end{itemize}
\end{theorem}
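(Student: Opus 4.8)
The plan is to treat the three cases separately, using as a common tool the discriminant identity $\Delta(f)=-64c\,\Delta(g)^2$, where $g(Y)=Y^3+aY^2+bY+c$ so that $f(X)=g(X^2)$. I would record this first by a resultant computation: since $f'(X)=2X\,g'(X^2)$, multiplicativity of the resultant gives $\mathrm{Res}(f,f')=\mathrm{Res}(f,2X)\cdot\mathrm{Res}(f,g'(X^2))=64c\cdot\Delta(g)^2$, whence $\Delta(f)=(-1)^{15}\mathrm{Res}(f,f')=-64c\,\Delta(g)^2$. Case (b) is then immediate from Theorem \ref{thm:main}, which already lists exactly the six polynomials. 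For four of the groups in (a) ($S_4\times C_2$, $A_4\times C_2$, $A_4$, $D_6$) together with one of the two $S_4$-actions, I would cite the infinite families constructed in \cite{Lavallee} (whose members even have constant term $\pm1$). This leaves two genuine tasks: the nonexistence statement (c) for $S_3$, and the construction of an infinite family for the remaining $S_4$-action, $6T8$.

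For (c) the decisive point is that the action of $S_3$ on six points is regular, so that a root $\alpha$ of $f$ already generates the full splitting field $L$ of $f$; that is, $\mathbb{Q}(\alpha)=L$ is a Galois $S_3$-sextic field. Let $F=\mathbb{Q}(\alpha^2)$ be its cubic subfield and $k$ its unique quadratic subfield. If $f$ were monogenic, then $g$ would be monogenic by Lemma \ref{lem:monogen}, so $\Delta(g)=\Delta(F)$, and moreover $\Delta(f)=\Delta(L)$ by definition of monogenicity. The conductor--discriminant formula for $S_3$ gives $|\Delta(L)|=|\Delta(k)|\cdot|\Delta(F)|^2$; comparing with $|\Delta(f)|=64\,|c|\,|\Delta(g)|^2=64\,|c|\,|\Delta(F)|^2$ forces $|\Delta(k)|=64\,|c|$. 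But $\Delta(f)$ lies in the square class of $-c$, so $k=\mathbb{Q}(\sqrt{-c})$ and therefore $|\Delta(k)|\le 4|c|$, a contradiction. I expect this to go through cleanly; the only subtle point is that it relies essentially on regularity, so that $\mathbb{Q}(\alpha)=L$ and the formula for $\Delta(L)$ applies. Reassuringly, the analogous comparison must fail for the $S_4$-actions, and it does: there $\mathbb{Q}(\alpha)$ is a non-normal sextic field of degree $6<24$, not the splitting field.

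For the remaining $S_4$-action I would first identify $6T8$ inside the wreath product $C_2\wr S_3\,(\cong S_4\times C_2)$, into which $\Gal(f)$ always embeds, by means of its two quadratic characters: the ``total sign change'' governs $\sqrt{-c}$, while the sign of the cubic resolvent governs $\sqrt{\Delta(g)}$. The two index-two subgroups isomorphic to $S_4$ are thus distinguished by $-c\in(\mathbb{Q}^\times)^2$ versus $-c\,\Delta(g)\in(\mathbb{Q}^\times)^2$ (both with $\Gal(g)=S_3$); the families of \cite{Lavallee}, having constant term $\pm1$, realize the first. For the second I would exhibit an explicit one-parameter subfamily, e.g.\ among the polynomials with $a=0$, where the condition $-c\,\Delta(g)=c\,(4b^3+27c^2)\in(\mathbb{Q}^\times)^2$ cuts out a rational curve admitting a parametrization. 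I would then verify that $\Gal(f)$ is exactly this $S_4$ (irreducibility, $\Gal(g)=S_3$, the square relation, and the absence of any further quadratic relation that would either shrink the group to $S_3$ or enlarge it to $A_4\times C_2$ or $S_4\times C_2$), and finally establish monogenicity for infinitely many members.

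The main obstacle is this last monogenicity step for $6T8$. Unlike in (c), here the shape $\Delta(f)=-64c\,\Delta(g)^2$ is perfectly compatible with monogenicity, but certifying it for infinitely many members requires controlling the index at every prime. The tame primes dividing the squarefree part of the cofactor can be dealt with by a standard squarefree-value sieve via Dedekind's criterion; the small primes $2$ and $3$, which always divide $\Delta(f)$ to high order, call for separate and more delicate bookkeeping and must be forced to contribute nothing to the index by suitable congruence conditions cutting down the subfamily. Keeping the Galois group pinned to $6T8$ across the whole family is the other delicate point. By contrast, case (c) is short and case (b) is immediate.
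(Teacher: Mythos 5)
Your discriminant identity $\Delta(f)=-64c\,\Delta(g)^2$ is correct (and is consistent with all the specializations appearing in the paper), case (b) is indeed immediate from Theorem \ref{thm:main}, and citing \cite{Lavallee} for the five groups other than $6T8$ matches the paper exactly. Your treatment of case (c) is correct but genuinely different from the paper's: the paper proves a more general statement (Lemma \ref{lem:aux}: no monogenic $f(X)=g(X^2)$ with $\Gal(f)\cong D_m$ regular, $m\ge 3$ odd) by bounding the $2$-adic discriminant exponent from below via \cite[Theorem 2.7]{HJ} and ramification theory ($2^{4m-2}\mid \Delta(f)$ versus a maximum of $3m$ for the field discriminant), whereas you exploit regularity to write $\mathbb{Q}(\alpha)=L$ and compare $|\Delta(f)|=64|c||\Delta(g)|^2$ against the conductor--discriminant factorization $|\Delta(L)|=|\Delta(k)|\cdot|\Delta(F)|^2$, forcing $|\Delta(k)|=64|c|$ while $k=\mathbb{Q}(\sqrt{-c})$ gives $|\Delta(k)|\le 4|c|$. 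This is clean and correct; note that it generalizes just as well as the paper's lemma (for odd $m$ one gets $|\Delta(k)|=2^{2m}|c|>4|c|$ by the same resultant computation), so neither approach is strictly stronger, though the paper's works purely locally at $2$ while yours needs the global conductor--discriminant formula.

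The genuine gap is the $6T8$ case, where your proposal stops at a plan precisely at the point where the actual mathematical content lies. You correctly identify the two $S_4$-subgroups of $C_2\wr S_3$ via the square classes of $-c$ and $-c\,\Delta(g)$ (this matches the criterion of \cite[Proposition 2.1]{AJ} used in the paper), but you never produce an explicit family, never pin the Galois group to exactly $6T8$, and explicitly defer the monogenicity verification (``the main obstacle''), including the behavior at $2$ and $3$. As it stands, nothing in your argument rules out, say, that every member of whatever subfamily your parametrization yields has index divisible by $2$ or $3$ --- this must be checked, not postulated. The paper closes this gap concretely: it takes $f=X^6+9X^4+bX^2+b$, for which $-c\,\Delta(g)=\bigl(2b(b-27)\bigr)^2$ is \emph{identically} a square (so no curve-parametrization step, such as your $a=0$ slice with the condition $c(4b^3+27c^2)\in(\mathbb{Q}^\times)^2$, is needed), computes $\Delta(g)=-4b(b-27)^2$ and $\Delta(f)=-2^{10}b^3(b-27)^4$, pins the generic Galois group by one specialization plus Hilbert irreducibility, and then verifies monogenicity for all $b\equiv 2\bmod 4$ with $b(b-27)$ squarefree, the infinitude of which is elementary. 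Your sieve-plus-congruence strategy is plausibly workable along the same lines, but until a family is fixed and the index computation at every prime (especially $2$, which always divides $\Delta(f)$ to high order) is actually carried out, case a) for $6T8$ is unproven in your write-up.
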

\begin{proof}
Due to \cite{Lavallee} and Theorem \ref{thm:main}, it remains to deal with case c), as well as the case $G= 6T8$ of case a). 

For the latter, one may consider the family of polynomials $X^6+9X^4+bX^2+b\in \mathbb{Z}[X]$. The discriminant of $g(X):=f(\sqrt{X})$ is of the form ``$(-b)$ times a square", whence it follows from \cite[Proposition 2.1]{AJ} that $\Gal(f/\mathbb{Q})$ embeds into $6T8$. Direct calculation at isolated values for $b$ (e.g., $b=1$) shows that, generically, $\Gal(f/\mathbb{Q})=6T8$, and hence by Hilbert's irreducibility theorem, this is the Galois group for all but a density-$0$ subset of all integers $b$. Using the discriminants $\Delta(g)=-4b(b-27)^2$ and $\Delta(f)=-2^{10}b^3(b-27)^4$ (as well as, e.g., the local behavior at the prime $2$), it is now an easy exercise to verify that $f$ is monogenic for all $b\equiv 2\bmod 4$ (which preserve the Galois group and) such that $b(b-27)$ is squarefree. That there are infinitely many $b$ with this property is easy to verify (e.g., if additionally $b$ is chosen coprime to $3$, then the squarefreeness condition just amounts to simultaneous squarefreeness of $b$ and $b-27$).

Finally, the case $G=S_3$ is contained in the more general assertion of Lemma \ref{lem:aux} below, thereby ending the proof.
\end{proof}

\begin{lemma}
\label{lem:aux}
There are no monogenic even polynomials $f(X)=g(X^2)$ with $g$ of odd degree $m\ge 3$ such that $Gal(f/\mathbb{Q})\cong D_m$ is the dihedral group (in its regular action).
\end{lemma}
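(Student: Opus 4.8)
The plan is to compare two independent expressions for the discriminant of the Galois field $K=\mathbb{Q}(\alpha)$, where $\alpha$ is a root of $f$: in the regular action the point-stabilizer is trivial, so $K$ is itself the splitting field, Galois over $\mathbb{Q}$ with group $D_m$. Write $L=\mathbb{Q}(\alpha^2)=\mathbb{Q}(\beta)$ for the degree-$m$ subfield generated by the root $\beta:=\alpha^2$ of $g$. First I would record the two consequences of monogenicity that drive the argument: since $f=g(X^2)$ is monogenic, Lemma \ref{lem:monogen} gives that $g$ is monogenic, so $\Delta(g)=\Delta(L)$ and $\Delta(f)=\Delta(K)$.

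Next I would establish two discriminant identities. The elementary one comes from the product formula $\Delta(f)=(-1)^{m(2m-1)}\prod_{\rho}f'(\rho)$ over the roots $\rho=\pm\sqrt{\beta_i}$ of $f$, together with $f'(\pm\sqrt{\beta_i})=\pm 2\sqrt{\beta_i}\,g'(\beta_i)$; collecting terms gives
\[
\Delta(f)=(-1)^{m}\,2^{2m}\,a_0\,\Delta(g)^2,
\]
where $a_0=g(0)$ is the constant term. The structural one comes from the conductor--discriminant formula for $K/\mathbb{Q}$. The involution $\sigma\colon\alpha\mapsto-\alpha$ fixes $L$, and since $m$ is odd, $\sigma$ is a reflection in $D_m$; hence $L$ is the fixed field of a reflection subgroup, and $\mathrm{Ind}_{\langle\sigma\rangle}^{D_m}\mathbf{1}$ is the natural permutation character $\mathbf{1}\oplus\bigoplus_{j}\rho_j$ over the two-dimensional irreducibles, with the sign character $\epsilon$ absent. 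Reading off the Artin conductors from the regular and permutation characters then yields
\[
|\Delta(K)|=|\Delta(F)|\cdot|\Delta(L)|^2,
\]
where $F=K^{\langle r\rangle}$ is the unique quadratic subfield (cut out by $\epsilon$) and $|\Delta(F)|=\mathfrak{f}(\epsilon)$.

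Combining the two identities with $\Delta(g)=\Delta(L)$ and $\Delta(f)=\Delta(K)$, the factors $|\Delta(L)|^2$ cancel and I obtain $|\Delta(F)|=2^{2m}|a_0|$, so that $v_2(\Delta(F))=2m+v_2(a_0)\ge 2m\ge 6$. But $F$ is a quadratic field, and the discriminant of a quadratic field is a fundamental discriminant, forcing $v_2(\Delta(F))\le 3$. This contradiction, valid for every odd $m\ge 3$, shows that no such monogenic $f$ exists.

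The computations are routine; the crux — and the step I would be most careful about — is the representation-theoretic bookkeeping yielding $|\Delta(K)|=|\Delta(F)||\Delta(L)|^2$, in particular verifying that the distinguished involution $\alpha\mapsto-\alpha$ is a reflection (so that $L$ carries exactly the $m$-point natural action and $\epsilon$ does not occur in its permutation character, by Frobenius reciprocity) and that $\epsilon$ indeed cuts out a genuine quadratic field $F$. The conceptual point is that monogenicity forces the quotient $\Delta(f)/\Delta(g)^2=\pm\,2^{2m}a_0$ to coincide with the discriminant of $F$, and the enormous $2$-part $2^{2m}$ simply cannot be the $2$-part of a fundamental discriminant once $m\ge 3$.
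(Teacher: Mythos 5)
Your proposal is correct, and it takes a genuinely different route from the paper's. The paper argues by squeezing the $2$-adic valuation of $\Delta(f)$: it cites \cite[Theorem 2.7]{HJ} for the divisibility $2^{2m}\Delta(g)^2\mid\Delta(f)$, deduces that $2$ must be wildly ramified, extracts the minimal wild exponent $m-1$ of $2$ in $\Delta(g)$ (giving $2^{4m-2}\mid\Delta(f)$), and contrasts this with the maximal possible exponent $3m$ of $2$ in the field discriminant of a degree-$2m$ dihedral field --- the bound $3m$ coming precisely from the fact that the inertia group at $2$ is generated by a reflection, so all $2$-ramification is concentrated in a quadratic subextension with exponent at most $3$. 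You replace both inequalities by exact identities: the composition formula $\Delta(f)=(-1)^{m}2^{2m}a_0\Delta(g)^2$ (an exact sharpening of the divisibility the paper imports from \cite{HJ}; I verified your sign and the formula, e.g.\ on $X^6-2$), and the conductor--discriminant factorization $|\Delta(K)|=|\Delta(F)|\cdot|\Delta(L)|^2$, which after cancelling $|\Delta(L)|^2=|\Delta(g)|^2$ pins down $|\Delta(F)|=2^{2m}|a_0|$ exactly --- impossible since a fundamental discriminant satisfies $v_2\le 3<2m$ (and $a_0\ne 0$ by irreducibility). The representation-theoretic bookkeeping you flag as the crux does go through: since the action is regular, $K=\mathbb{Q}(\alpha)$ is its own Galois closure, so $\alpha\mapsto-\alpha$ is a well-defined nontrivial automorphism, an involution, hence a reflection because $m$ is odd; $L=K^{\langle\sigma\rangle}$ by degree count; and $\mathrm{Ind}_{\langle\sigma\rangle}^{D_m}\mathbf{1}=\mathbf{1}\oplus\bigoplus_j\rho_j$ is correct by Frobenius reciprocity ($\epsilon(\sigma)=-1$, and each $\rho_j(\sigma)$ has eigenvalues $\pm1$, so $\mathbf{1}$ occurs once in each restriction). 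What your route buys: no wild-ramification case analysis at $2$ whatsoever, and an exact equation in place of the paper's squeeze between $4m-2$ and $3m$ (your contradiction $2m>3$ would even work for $m=2$ if $m$ were allowed to be even, whereas the paper's inequality $3m<4m-2$ needs $m\ge 3$). What the paper's route buys: it is elementary ramification theory, needing neither the conductor--discriminant formula nor character theory. At bottom the two proofs share the same skeleton --- monogenicity converts $\Delta(f)$ and $\Delta(g)$ into field discriminants via Lemma \ref{lem:monogen}, and the forced factor $2^{2m}$ is too large for the $2$-part of a dihedral discriminant, which lives essentially in the quadratic subfield.
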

\begin{proof}
Assume on the contrary that such $f,g$ exist. Firstly, as a special case of \cite[Theorem 2.7]{HJ}, $\Delta(f)$ is divisible by $\Delta(g)^2\cdot 2^{2m}$. Since $f$ is monogenic and has the same Galois closure as $g$, the prime $2$ must ramify in the field generated by a root of $g$, and hence (since $g$ is monogenic as well by Lemma \ref{lem:monogen}) $2$ must divide $\Delta(g)$. More concretely, $2$ must be wildly ramified, since the maximal possible exponent of a tamely ramified prime in the field discriminant of a degree-$2m$ extension would be $2m-1$. But the smallest possible discriminant exponent for the prime $2$ in a wildly ramified degree-$m$ extension with Galois group (of the Galois closure) $D_m$ is $m-1$ (indeed, from the fact that the inertia and decomposition group at $2$ must then both be generated by an involution with one fixed point, it follows that there are a total of $\frac{m-1}{2}$ primes extending $2$, each contributing exponent at least $2$ to the discriminant). %
Therefore $2^{m-1}| \Delta(g)$, and consequently $2^{2(m-1)+2m}=2^{4m-2}| \Delta(f)$. But the maximal possible exponent of $2$ in the discriminant of a degree-$2m$ extension with Galois group $D_{2m}$ can be easily calculated, as follows: firstly, since the wild inertia subgroup is a $2$-group normal in the full inertia group and the order-$2$ subgroups of $D_m$ are self-normalizing (since $m$ is odd), the inertia group must be cyclic and generated by an involution. Next, the maximal possible discriminant exponent of $2$ in a quadratic extension of $\mathbb{Q}$ is $3$, and due to the follow-up degree-$m$ extension being unramified, the maximal possible exponent in the full degree-$2m$ extension is $3m$. But $3m<4m-2$, showing that $f$ cannot be monogenic.
\end{proof}


\begin{thebibliography}{9}
\bibitem{AJ} C.\ Awtrey, P.\ Jakes, \textit{Galois groups of even sextic polynomials}. Canad.\ Math.\ Bull.\ 63 (3) (2020),  670--676.
\bibitem{Gaal} I.\ Ga\'al, \textit{Investigating monogenity in a family of cyclic sextic fields}. Preprint (2025). \texttt{https://arxiv.org/pdf/2505.04297}.
\bibitem{Gaal_Book} I.\ Ga\'al, \textit{Diophantine equations and power integral bases. Theory and algorithms}. 2nd edition. Birkh\"auser, Boston, MA, 2019.
\bibitem{GR} I.\ Ga\'al, L.\ Remete, \textit{Integral bases and monogenity of the simplest sextic fields}. Acta Arith.\ 183(2) (2018),  173--183.
\bibitem{Gras} M.N.\ Gras, \textit{Non-monog\`en\'eit\'e de l'anneau des entiers des extensions cycliques de $\mathbb{Q}$ de degr\'e premier $\ell \ge 5$}. J.\ Number Theory 23 (1986), 347--353.
\bibitem{GK} O.\ Grossman, D.\ Kim, \textit{An infinite family of non-abelian monogenic number fields}. Preprint (2015). \texttt{https://math.mit.edu/research/undergraduate/spur/documents/2015grossman-kim.pdf}.
\bibitem{Gyory} K.\ Gy\"ory, \textit{Sur les polyn\^omes \`a coefficients entiers et de discriminant donn\'e, III}. Publ.\ Math.\ Debrecen 23 (1976), 141--165.
\bibitem{HJ} J.\ Harrington, L.\ Jones, \textit{Monogenic cyclotomic compositions}. Kodai Math. J. 44(1) (2021), 115--125.
\bibitem{Jones} L.\ Jones, \textit{Monogenic even cyclic sextic polynomials}. Preprint (2025). \texttt{https://arxiv.org/abs/2502.04120}.
\bibitem{Jones2} L.\ Jones, \textit{Monogenic even octic polynomials and their Galois groups}. New York J.\ Math.\ 31 (2025), 91--125.
\bibitem{Lavallee} M.\ Lavallee, B.K.\ Spearman, K.S.\ Williams, \textit{Lifting monogenic cubic fields to monogenic sextic fields}. Kodai Math.\ J.\ 34 (2011), no.\ 3, 410--425.
\bibitem{MR} L.\ Miller-Sims, L.\ Robertson, \textit{Power-integral bases for real cyclotomic fields}. Bull.\ Austral.\ Math.\ Soc.\ 71 (2005),  167--173.
\end{thebibliography}
\end{document}